\numberwithin{equation}{section}
\newtheorem{thm}{Theorem}[section]
\newtheorem{prop}[thm]{Proposition}
\newtheorem{cor}[thm]{Corollary}
\newtheorem{lem}[thm]{Lemma}
\theoremstyle{definition}
\newtheorem{defn}[thm]{Definition}
\newtheorem*{remark*}{Remark}
\newcommand{\cW}{{\mathcal W}}
\newcommand{\cP}{{\mathcal P}}
\newcommand{\bm}{{m\rho}}
\newcommand{\C}{\mathbb{C}}  
\newcommand{\Z}{\mathbb{Z}}  
\newcommand{\N}{\mathbb{N}}  
\newcommand{\lri}{\cO}  
\newcommand{\mi}{\cP}   
\newcommand{\ii}{i}
\newcommand{\jj}{j}
\newcommand{\GL}{\text{GL}}
\newcommand{\aGL}{\textup{\textsf{GL}}}
\newcommand{\aB}{\textup{\textsf{B}}}
\newcommand{\aT}{\textup{\textsf{T}}}
\newcommand{\Stab}{\text{Stab}}
\newcommand{\Hom}{\text{Hom}}
\newcommand{\cO}{\mathcal{O}}
\newcommand{\ind}{\mathrm{Ind}}
\newcommand{\val}{\mathrm{val}}
\newcommand{\am}{A}
\newcommand{\torusm}{\Theta}
\newcommand{\torusmij}{\torusm_{\ii\jj}}
\newcommand{\gammam}{\Gamma}
\newcommand{\level}{\lambda}
\newcommand{\minv}{\mu}
\newcommand{\kinv}{\kappa}
\newcommand{\htx}{\xi}
\newcommand{\htz}{\zeta}
\newcommand{\htxz}{{\htx,\htz}}
\newcommand{\com}{\cO_m}
\newcommand{\bmk}{B_m^{\delta}}
\newcommand{\emk}{E^\delta}
\newcommand{\dmk}{\Delta^\delta_m}
\newcommand{\ut}{\texttt{u}}
\newcommand{\st}{\texttt{s}}
\newcommand{\et}{\texttt{e}}
\newcommand{\Id}{\texttt{I}}
\newcommand{\M}{N^+}
\newcommand{\triv}{\mathbf{1}}
\newcommand{\cone}{{\textmd{C}}}
\newcommand{\cint}{{\cone^\circ}}
\newcommand{\cbd}{{\partial\cone}}
\title[Unramified principal series for $\GL(3)$]{On the unramified principal series of $\GL(3)$ over non-archimedean local fields}
\author{Uri Onn}\thanks{Onn was supported by the Israel Science Foundation (ISF grant 382/11)}\address{Department of Mathematics, Ben Gurion
  University of the Negev, Beer-Sheva 84105, Israel}
  \email{urionn@math.bgu.ac.il}
\author{Pooja Singla}\thanks{Singla was supported by the Center for Advanced Studies in Mathematics at Ben Gurion University}
\address{Department of Mathematics,
Indian Institute of Science,
Bangalore 560012, India}
  \email{pooja@math.iisc.ernet.in}
\keywords{Representations of $p$-adic groups, Unramified principal series, Maximal compact subgroup}
\subjclass[2010]{20G25, 20G05}
\begin{document}
\maketitle

\begin{abstract}
Let $F$ be a non-archimedean local field and let $\lri$ be its ring of integers.
We give a complete description of the irreducible constituents of the restriction of the unramified
principal series representations of $\aGL_3(F)$ to $\aGL_3(\lri)$.
\end{abstract}

\section{Introduction}\label{sec:intro}

\subsection{Overview}
Let $F$ be a non-archimedean local field with ring of integers $\lri$, maximal ideal $\mi$ and let $\pi$ be a fixed uniformizer. Our interest in this paper is in the restriction of unramified principal series representations of $\aGL_n(F)$ to its maximal compact subgroup $\aGL_n(\lri)$. More precisely, let $\aB$ be a Borel subgroup of $\aGL_n$ and let $\aT$ be a maximal torus contained in $\aB$. Concretely, we may take the group of upper triangular matrices and the group of diagonal matrices, respectively. Any linear character $\chi: \aT(F) \to \C^\times$ can be inflated to a linear character of $\aB(F)$, still denoted $\chi$. The principal series representation corresponding to $\chi$ is the induced representation $\ind_{\aB(F)}^{\aGL_n(F)}(\chi)$ of $\aGL_n(F)$ on the space of continuous functions
\[
V_\chi=\{ f \in C(\aGL_n(F)) \mid f(gb)=\chi(b)\| b \|^{1/2} f(g), \text{for all $g \in \aGL_n(F), b \in \aB(F)$}   \},
\]
where $b \mapsto \|b\|$ is the modular character of $\aB(F)$. We assume that the representation is unramified, namely, that the restriction of $\chi$ to $\aT(\lri)$ is trivial, and focus on the decomposition to irreducible constituents of its restriction to $\aGL_n(\lri)$. By Frobenius reciprocity this restriction is isomorphic to $\ind_{\aB(\lri)}^{\aGL_n(\lri)}(\triv)$. The case $n=2$ was fully treated in \cite{MR0338274}. Partial results for the case $n=3$ were obtained in \cite{MR2504482} and it is the goal of this paper to give a complete description in this case. Further results on the restriction of principal series representations of $\GL_n$ to the maximal compact subgroup can be found in \cite{0257.22018}.



\subsection{The unramified principal series of $\GL(3)$}\label{subsec:CN} We first describe and reformulate several results concerning $\GL_3$ which were obtained by Campbell and Nevins in~\cite{MR2504482}. Let $G = \aGL_3(\cO)$, let $B$ be its subgroup of upper triangular matrices and let $V = \ind_{B}^G(\triv)$. For $\ell \in \N$, let $K^{\ell}$ denote the $\ell^{\text{th}}$ principal congruence subgroup of $G$, namely, the kernel of the canonical map from $G$ to $\aGL_3(\cO/\mi^{\ell})$. Being normal subgroups of $G$, the groups $K^\ell$ give rise to a coarse decomposition of $V$ into $G$-invariant spaces
$ V \cong \oplus_{\ell=1}^{\infty} V^{K^\ell}/V^{K^{\ell-1}}$. A finer decomposition of these spaces, yet not to irreducible ones, is obtained by considering certain \lq parabolic\rq ~ subgroups of $G$ which are defined as follows. Let $\N_0$ stand for $\N \cup \{0\}$ endowed with the natural ordering and consider $\N_0^3$ with the product ordering, i.e. $c \preccurlyeq d$ if and only if $c_i \le d_i$ for $i=1,2,3$. Let $\cone$ be the cone
\[
\cone = \{(c_1, c_2, c_3) \in \N_0^3 \mid  c_1, c_2 \leq c_3 \leq c_1 + c_2 \}.
\]
To an element $c = (c_1, c_2, c_3)\in \cone$ we associate the compact open subgroup of $G$
\[
P_c = \left[
\begin{matrix}
\cO & \cO & \cO \\
\mi^{c_1} & \cO & \cO \\
\mi^{c_3} & \mi^{c_2} & \cO
\end{matrix}
\right] \cap G,
\]
which contains $B$. The defining inequalities of $\cone$ ensure that $P_c$ is indeed a group. Let $U_c = \ind_{P_c}^G(\triv)$ be the permutation representation of $G$ arising from its action on the coset space $G/P_c$. It follows that $P_c \subseteq P_d $ if and only if $c \succcurlyeq d$, hence $U_d$ is a subrepresentation of $U_c$ if and only if $d \preccurlyeq c$.
In the center of our discussion lie the representations
\[
V_c = U_c/\sum_{d \prec c} U_d, \qquad (c \in \cone).
\]
We have
\begin{eqnarray}
\label{eq:Vc-decomposition}
V =  \bigoplus_{c \in \cone} V_c.
\end{eqnarray}
The mutual relations between the representations $V_c$ are completely determined in \cite{MR2504482}. To describe them we introduce the auxiliary functions $\level,\kinv,\minv :\cone \to \N_0$. For $c \in \cone$ let
\begin{align*}
\level(c)&=c_3, \\
\kinv(c)&=c_1+c_2-c_3,\\
\minv(c)&=\min \{c_1+c_2-c_3, c_3-c_1,c_3-c_2\}.
\end{align*}
With these we have
\begin{align*}\label{cn1}\tag{CN1}
&\text{Let $c,d \in \cone$. Then} \\ &\text{$V_c \simeq V_{d}$ if and only if $\level(c)=\level(d)$, $\kinv(c)=\kinv(d)$, $\minv(c)=\minv(d)$ and $c=d$ if $\kinv(c)>\minv(c)$}. \qquad
\end{align*}
Statement~\eqref{cn1} is a reformulation of several results from~\cite{MR2504482} which requires a proof (see Proposition~\ref{prop:reformulation} below). Let $\cint$ and $\cbd$ denote the interior and boundary of $\cone$, respectively. That is
\[
\begin{split}
\cint &=  \{ c = (c_1, c_2, c_3) \mid     c_1,c_2 <   c_3 < c_1 + c_2 \}, \\
 \cbd &= \cone \smallsetminus \cint.
\end{split}
\]
Concerning the irreducibility of the representations $V_c$ one has
\begin{align}\label{cn2}   \tag{{CN2}}
&\text{The representation $V_c$ is irreducible if and only if $c \in \cbd$.} \qquad \qquad & \qquad & \qquad \qquad
\end{align}


Statement~\eqref{cn2} is proved in \cite[Theorems 6.1, 7.1 and 8.1]{MR2504482}. The complete description of the double coset spaces $P_c \backslash G  / P_d$ ($c,d \in \cone$), obtained in~\cite{MR2504482}, does not lend itself to decompose the representations $V_c$ ($c \in \cint$), which comprise most of the constituents of $V$. However, crucial for us is the following observation made in ~\cite[\S8]{MR2504482}. Let $\rho=(1,1,1)$. Let $m \in \N$ and $c \in \cint \cap \cone+\bm$. For all $d \in [c-\bm,c]$ let
\begin{equation}\label{eq:def.of.Udm.Vdm}
\begin{split}
U_d^{m} &= \mathrm{Ind}_{P_d}^{P_{c-\bm}}(\triv), \\
V_c^{m} &= U_c^{m}/\sum_{c-\bm \preccurlyeq d \prec c} U_d^{m}.
\end{split}
\end{equation}
It follows that $V_c = \mathrm{Ind}_{P_{c-\bm}}^{G} \left(V_c^{m}\right)$. For the special case $m=\mu(c)$ the relations between $V_c$ and $V_c^m$ become very tight.
\begin{align*}\label{cn3}\tag{CN3}
&\text{Let $c \in \cint$. Then $\dim \mathrm{End}_G\left( V_c \right) = \dim \mathrm{End} _{P_{c-\minv(c)\rho}}\left(V_c^{\minv(c)}\right)$. In particular, the irreducible}\\ & \text{constituents of $V_c$ are induced from the irreducible constituents of $V_c^{\mu(c)}$. }
\end{align*}

We remark that any linear combination of the invariants $\level$, $\kinv$ and $\minv$ above could be used throughout. We made this particular choices as they have natural interpretations: $\level(c)$ is the level of the representation $V_c$, and both $\kinv(c)$, $\minv(c)$ turn out to be the parameters which control the decomposition of $V_c$ ($c \in \cint$).

\subsection{Description of results and organization of the paper} The main result of this paper is a complete description of the irreducible constituents of the representations $V_c$ ($c \in \cint$). These are described in terms of induced linear characters of certain subquotients of the groups $P_c$.
Consequently we obtain a complete description of the irreducibles in $V$. In Section~\ref{sec:bmk} we define twisted Heisenberg groups and their toral extensions. As it turns out, these are quotients of $P_c$ ($c \in \cint$) which carry the complete information on the representations $V_c$ but are much more accessible. In Section~\ref{sec:bmk-mod-torus} we define and analyze in detail specific multiplicity free representations of toral extensions of twisted Heisenberg groups and in Section~\ref{sec:embedding} we show that the $V_c^m$'s for $m\in \mathbb N$ and $\mu(c) \geq m$ are subrepresentations of these multiplicity free representations. It remains to identify what are the components of $V_c^m$ and this is achieved in Section \ref{sec:decomposition}. We end the paper by computing the multiplicities and dimensions of the irreducible constituents in $V$. Notably, these depend only on the residue field $\lri/\mi$, and the dependence is through substitution in universal polynomials defined over $\Z$.

\subsection{Notation, conventions and tools}
For $m \in \N$ we write $\lri_m$ for the finite quotient $\lri/\mi^m$. We use $\val(\cdot)$ to denote the valuation on $\lri$. It is convenient to use the same symbol for the finite quotients with the convention that $\val(0)=m$ for $0 \in \lri_m$. When chances for confusion are slim we use the same notation for elements or subsets of $\lri$ and their respective images in $\com$. For a group $G$ and elements $g,x \in G$ we write ${}^gx=gxg^{-1}$ for the left conjugation action and $x^g=g^{-1}xg$ for right conjugation. If $H < G$ we write ${}^gH=gHg^{-1}$. If $\chi$ is a character of $H$ we write ${}^g\chi$ for the character of ${}^gH$ defined by ${}^g\chi(x)=\chi(g^{-1}xg)$ for all $x \in {}^gH$. Throughout we use fairly standard  tools from representation theory such as Mackey and Clifford theories and the following criterion for the existence of non-trivial intertwining operators. If $\chi_i$ are linear characters of subgroups $H_i$ of $G$ we have the following realization of the intertwining operators
\[
\Hom_G\left(\ind_{H_1}^{G}(\chi_1), \ind_{H_2}^{G}(\chi_2)\right)=\left\{f:G \to \C \mid f(h_2gh_1)=\chi_2(h_2)f(g)\chi_1(h_1), \forall h_i \in H_i, \forall g \in G\right\},
\]
and an element $g \in G$ supports a non-zero intertwining function if and only if $\chi_1={}^g\chi_2$ on $H_1 \cap {}^gH_2$.

\section{Twisted Heisenberg groups and their toral extensions}\label{sec:bmk}

To study the representation $V_c^m$ we first need to analyze $U_c^m$, which is the permutation representation of $P_{c-\bm}$ arising from its action on the right cosets $P_{c-\bm}/P_c$ for $m \in \N$, $c \in \cint \cap \cone+\bm$.

\begin{defn} Let $R$ be a commutative ring with identity and $\delta \in R$. The {\em $\delta$-twisted Heisenberg groups over $R$}, denoted $H^\delta_R$, is the set of triples in $R^3$ endowed with the multiplication
\[
(x,y,z)\cdot(x',y',z') = (x+x', y+y', z+z'+ \delta y x').
\]

A {\em toral extension of a $\delta$-twisted Heisenberg group} is the semidirect product $H^\delta_R \rtimes  (R^\times)^3$ of $(R^\times)^3$ and $H^\delta_R$, with the action
\begin{equation}\label{eq:action.of.torus}
{}^t(x,y,z)=(t_1^{-1}t_2x,t_2^{-1}t_3y,t_1^{-1}t_3z),
\end{equation}
for $t=(t_1,t_2,t_3) \in (R^\times)^3$ and $(x,y,z) \in H^\delta_R$. We denote this semidirect product by $B^\delta_R$.
\end{defn}

The family $\{ H^\delta_R \mid \delta \in R\}$ interpolate between the standard Heisenberg group ($\delta=1$) and the abelian group $R^3$ with addition ($\delta = 0$). The group $B^{\delta=1}_R$ is isomorphic to the group of invertible triangular matrices over $R$. It is convenient to keep the analogy with this special case and use a matrix presentation for $B^\delta_R$:
\begin{equation}\label{eq:matrix.presentation.bmk}
[(x,y,z),(t_1,t_2,t_3)] \mapsto \left[\begin{matrix} 1 &  &  \\ x & 1 &  \\ z & y & 1 \end{matrix} \right] \circ_\delta\left[\begin{matrix} t_1 &  &  \\  & t_2 &  \\  &  & t_3 \end{matrix} \right],
\end{equation}
with multiplication defined by
\begin{equation}\label{eq:multiplication.bkm}
\left[\begin{matrix} t_1 &  &  \\ x & t_2 &  \\ z & y & t_3 \end{matrix} \right] \circ_\delta \left[\begin{matrix} t'_1 &  &  \\ x' & t'_2 &  \\ z' & y' & t'_3 \end{matrix} \right]=\left[\begin{matrix} t_1t'_1 &  &  \\ x t'_1+t_2x' & t_2t'_2 &  \\ zt'_1+\delta yx'+t_3z' & yt'_2+t_3y' & t_3t'_3 \end{matrix} \right],
\end{equation}
which is almost the usual matrix multiplication except for the twist by $\delta$ at the $(3,1)$-entry. It will be useful in the sequel to have the following description of $B_R^\delta$ modulo its center.

\begin{prop}\label{prop:stucture.of.bmk.mod.center} Let $R$ be a commutative ring with identity and $\delta \in R$. Let
\[
 E^\delta_R = R^2  \rtimes_\delta \left[ \begin{matrix} R^\times & \\ R & R^\times \end{matrix}\right],
\]
with the action
\begin{equation}\label{eq:action.delta}
{\left[ \begin{matrix} \alpha & \\ \beta & \gamma \end{matrix}\right]} \circ_\delta \left[\begin{matrix} x \\ z \end{matrix}\right]=\left[ \begin{matrix}
\alpha x \\ \beta x\delta+\gamma z \end{matrix}\right].
\end{equation}
Then $B^\delta_R/Z_{B^\delta_R} \simeq E^\delta_R$.

\end{prop}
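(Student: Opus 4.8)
The plan is to exhibit an explicit surjective homomorphism $\phi: B^\delta_R \to E^\delta_R$ and identify its kernel with the center $Z_{B^\delta_R}$. First I would compute the center of $B^\delta_R$ directly from the multiplication rule~\eqref{eq:multiplication.bkm} together with the torus action~\eqref{eq:action.of.torus}. An element $[(x,y,z),(t_1,t_2,t_3)]$ is central iff it commutes with all Heisenberg elements $[(x',y',z'),(1,1,1)]$ and with all torus elements $[(0,0,0),(s_1,s_2,s_3)]$. Commuting with the torus forces $x=y=z=0$ (using that the action scales $x$ by $t_1^{-1}t_2$, etc., and $R^\times$ is nontrivial in the cases of interest; more precisely one gets $(s_1^{-1}s_2 - 1)x = 0$ for all $s$, and similarly for $y,z$, so on the nose $x=y=z=0$), and commuting with Heisenberg elements forces $t_1 = t_2 = t_3$ via the off-diagonal entries of~\eqref{eq:multiplication.bkm}. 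Hence $Z_{B^\delta_R} = \{[(0,0,0),(t,t,t)] \mid t \in R^\times\}$, a copy of $R^\times$ sitting diagonally in the torus.

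Next I would define $\phi$ by sending the matrix $\left[\begin{smallmatrix} t_1 & & \\ x & t_2 & \\ z & y & t_3\end{smallmatrix}\right]$ to the pair consisting of the affine part $\left[\begin{smallmatrix} x \\ z\end{smallmatrix}\right] \in R^2$ and the $2\times 2$ block $\left[\begin{smallmatrix} t_1^{-1}t_2 & \\ y & t_3^{-1}t_2 \end{smallmatrix}\right]$ (or some similar normalization — the precise entries need to be chosen to match the $E^\delta_R$-action~\eqref{eq:action.delta}). That is, $\phi$ records $x$, $z$, the single Heisenberg coordinate $y$, and the two torus ratios that survive modulo the diagonal. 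The content is then a bookkeeping check: compose two elements of $B^\delta_R$ via~\eqref{eq:multiplication.bkm}, apply $\phi$, and verify the result agrees with the product in $E^\delta_R$ computed via the semidirect product structure and~\eqref{eq:action.delta}. The $\delta$-twist appears in exactly one spot on each side — the $(3,1)$-entry $zt_1' + \delta y x' + t_3 z'$ in $B^\delta_R$ and the $\beta x \delta + \gamma z$ term in~\eqref{eq:action.delta} — so the twists are designed to correspond, and the normalization of $\phi$ must be pinned down so that they do. Surjectivity of $\phi$ is immediate since $x, y, z$ and the ratios $t_1^{-1}t_2$, $t_3^{-1}t_2$ range over all of $R^2 \times R \times R^\times \times R^\times$, and $\ker\phi$ is exactly the set of elements with $x=y=z=0$ and $t_1 = t_2 = t_3$, i.e.\ $Z_{B^\delta_R}$. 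Then the first isomorphism theorem gives $B^\delta_R / Z_{B^\delta_R} \simeq E^\delta_R$.

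The only mild subtlety — and the step most prone to sign/inverse errors — is choosing the right formula for $\phi$ on the torus part so that the $E^\delta_R$-multiplication matches. Because $E^\delta_R$ is written with its torus block as $\left[\begin{smallmatrix}\alpha & \\ \beta & \gamma\end{smallmatrix}\right]$ acting on $\left[\begin{smallmatrix}x\\z\end{smallmatrix}\right]$ by $\left[\begin{smallmatrix}\alpha x \\ \beta x \delta + \gamma z\end{smallmatrix}\right]$, and the action of $B^\delta_R$ on its own lower-left column by~\eqref{eq:multiplication.bkm} is $x \mapsto x t_1' + t_2 x'$, $z \mapsto z t_1' + \delta y x' + t_3 z'$, one sees that $\phi$ should take $t_1 \mapsto$ something built from $t_2/t_1$ and $t_3 \mapsto t_2/t_3$ with $y$ landing in the $\beta$-slot up to a unit; I would fix the exact normalization by demanding consistency on a few generators (pure torus elements, pure Heisenberg elements) and then verify the general case. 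Everything else is routine verification against the displayed multiplication formulas, so I do not expect any genuine obstacle beyond this normalization bookkeeping.
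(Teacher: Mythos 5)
Your skeleton matches the paper's: exhibit an explicit surjection $\phi: B^\delta_R \to E^\delta_R$ and divide out the kernel. The paper's proof consists precisely of displaying the formula for $\phi$, namely
\[
\phi\!\left(\left[\begin{matrix} t_1 &  &  \\ x & t_2 &  \\ z & y & t_3 \end{matrix} \right]\right) = t_1^{-1}\!\left[\begin{matrix} x \\ z \end{matrix}\right] \rtimes t_1^{-1}\!\left[ \begin{matrix} t_2  & \\ y & t_3 \end{matrix}\right],
\]
whose kernel is exactly $\{\,[(0,0,0),(t,t,t)] \mid t \in R^\times\,\}$. Note that both the $t_1^{-1}$ prefactor on the affine part and the choice $\gamma = t_1^{-1}t_3$ (not $t_2 t_3^{-1}$, which you floated) are forced by the $(3,1)$-entry check against \eqref{eq:multiplication.bkm}; your guesses would not survive that check, but you flagged the normalization as to-be-determined, so this is an unfinished step rather than an error.

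The genuine gap is in your computation of the center, and it is not innocuous. From commuting with torus elements you get $(s_1^{-1}s_2 - 1)x = 0$ for all $s$, and conclude ``on the nose $x=y=z=0$.'' This requires the ideal generated by $\{u - 1 : u \in R^\times\}$ to be all of $R$, which fails for $R = \Z/4$ and, more pertinently here, for $R = \lri/\mi^m$ when the residue field is $\mathbb F_2$ (there $R^\times = 1 + \pi R$, so that ideal is only $\pi R$). You also omit the constraint coming from commuting with Heisenberg elements beyond $t_1=t_2=t_3$: equating the $z$-entries gives $\delta y x' = \delta y' x$ for all $x',y'$, hence $\delta x = \delta y = 0$. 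That constraint kills $x$ and $y$ when $\delta$ is a non-zero-divisor, but it says nothing about $z$, so for $q=2$ the group-theoretic center genuinely contains elements $[(0,0,z),(t,t,t)]$ with $z \in \pi^{m-1}\com$ beyond the scalars. A cardinality count then shows $B^\delta_R / Z(B^\delta_R) \not\simeq E^\delta_R$ for such $R$. The resolution is that $Z_{B^\delta_R}$ in the proposition is not the literal group-theoretic center but the subgroup of scalar matrices $\{\,[(0,0,0),(t,t,t)]\,\}$, by analogy with the center of $\GL_n$; with that reading your detour through computing the center is both unnecessary and incorrect, and the proof reduces to verifying that $\phi$ is a homomorphism with the stated kernel.
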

\begin{proof} The explicit isomorphism is
\begin{equation}\label{eq:bmktoemk}
\phi:\left[\begin{matrix} t_1 &  &  \\ x & t_2 &  \\ z & y & t_3 \end{matrix} \right] \mapsto t_1^{-1}\left[\begin{matrix} x \\ z \end{matrix}\right] \rtimes t_1^{-1}\left[ \begin{matrix} t_2  & \\ y & t_3 \end{matrix}\right]
\end{equation}
\end{proof}


Let $R=\lri/\mi^m=\com$ and in this case denote the groups above $H^\delta_m$, $\bmk$ and $\emk$. Let $T_m$ stand for the subgroup of diagonal matrices in $\bmk$. The relevance of the groups $\bmk$ and $\emk$ to the permutation representations $U_c^m$ comes from Theorem~\ref{thm:identification-of-actions}.

For $c \in \cone$ let
\[
N_c = \left[ \begin{matrix} 1 & & \\ \mi^{c_1} & 1 & \\ \mi^{c_3} & \mi^{c_2} & 1 \end{matrix}\right],
\M = \left[ \begin{matrix}
1 & \cO & \cO \\ & 1 & \cO \\ & & 1
\end{matrix} \right],
T^m= T \cap K^{m}.
\]
\begin{lem} Let $c=(c_1,c_2,c_3)  \in \cone$ and let $m \in \mathbb N$ such that $\mu(c) \geq m$. Let $\delta = \pi ^{\kinv(c-m\rho)}=\pi^{c_1+c_2-c_3-m}$. Then the map $\eta : P_{c-m\rho} \to (\bmk,\circ_\delta)$ defined by
\begin{equation}\label{eq:Pcmktobmk}
\eta:\left[\begin{matrix} t_1 & \star & \star \\ \pi^{c_1-m}x & t_2 & \star \\ \pi^{c_3-m}z & \pi^{c_2-m}y & t_3 \end{matrix} \right]
\mapsto \left[\begin{matrix} t_1 &  &  \\ x & t_2 &  \\ z & y & t_3 \end{matrix} \right] \mod \mi^m,
\end{equation}
is a surjective homomorphism with kernel $N_{c} T^m N^{+}$.
\end{lem}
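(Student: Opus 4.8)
The plan is to verify, in order, that $\eta$ is well defined, that it is multiplicative, that it is onto, and that its kernel is $N_cT^m\M$; each is a short matrix computation, so below I only flag the points at which the hypotheses enter. First I would unwind $\minv(c)\ge m$: the three inequalities $c_3-c_1\ge m$, $c_3-c_2\ge m$ and $c_1+c_2-c_3\ge m$ give $c_1,c_2\ge 2m$ and $c_3\ge 3m$, and in particular $c-\bm\in\cone$, so that $P_{c-\bm}$ and the exponents $c_i-m$ appearing in \eqref{eq:Pcmktobmk} make sense. Writing $g\in P_{c-\bm}$ as in \eqref{eq:Pcmktobmk}, the three sub-diagonal entries of $g$ lie in $\mi$ (because $c_i-m\ge 1$), so every term other than $t_1t_2t_3$ in the Leibniz expansion of $\det g$ has a factor in $\mi$; as $\det g\in\cO^\times$ this forces $t_1t_2t_3\in\cO^\times$, hence $t_1,t_2,t_3\in\cO^\times$, and $\eta(g)$ is a genuine element of $\bmk$. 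Surjectivity is then immediate: lift $(t_1,t_2,t_3)$ to units of $\cO$ and $(x,y,z)$ to elements of $\cO$; the resulting lower-triangular matrix lies in $P_{c-\bm}$ (its determinant is the product of the lifted $t_i$, hence a unit) and maps under $\eta$ to the prescribed element.

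For multiplicativity I would multiply out $gg'$ in $P_{c-\bm}$ and compare with the twisted product \eqref{eq:multiplication.bkm}. Writing $g,g'$ as in \eqref{eq:Pcmktobmk}, the $(3,1)$-entry of $gg'$ equals $\pi^{c_3-m}zt_1'+\pi^{(c_1-m)+(c_2-m)}yx'+\pi^{c_3-m}t_3z'=\pi^{c_3-m}(zt_1'+\delta yx'+t_3z')$, since $\delta=\pi^{c_1+c_2-c_3-m}$, which is precisely the twist at the $(3,1)$-slot in \eqref{eq:multiplication.bkm}; the $(2,1),(3,2),(1,1),(2,2),(3,3)$-entries of $gg'$ reduce modulo $\mi^m$ to the untwisted products $xt_1'+t_2x'$, $yt_2'+t_3y'$, $t_1t_1'$, $t_2t_2'$ and $t_3t_3'$, because every cross term one discards — for instance $g_{23}g_{31}'\in\mi^{c_3-m}$ in the $(2,1)$-slot, which is absorbed into $\pi^{c_1-m}\mi^m$ as $c_3-c_1\ge m$, or $g_{12}g_{21}'\in\mi^{c_1-m}\subseteq\mi^m$ in the $(1,1)$-slot — lies in $\mi^m$ thanks to $c_3-c_1,c_3-c_2\ge m$ and $c_i\ge 2m$. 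Conceptually, the $\delta$-twist is forced because conjugation by $\Delta=\mathrm{diag}(1,\pi^{c_1-m},\pi^{c_3-m})$ embeds $P_{c-\bm}$ in $\GL_3(\cO)$, rescales the $(3,2)$-slot by $\pi^{c_1-c_3}$ so that the coordinate there becomes $\delta y$, and pushes the super-diagonal into $\mi^m$; thus $\eta$ is the composite of this conjugation with reduction mod $\mi^m$ and the renormalisation of the $(3,2)$-entry by $\delta$. I expect this verification — that the discarded cross terms really lie in $\mi^m$ — to be the only genuinely delicate step, since it is here that both $\minv(c)\ge m$ and the exact value of $\delta$ are used.

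Finally, \eqref{eq:Pcmktobmk} shows that $\eta(g)=1$ exactly when $g$ lies in the set $\cK$ of matrices in $P_{c-\bm}$ whose diagonal lies in $1+\mi^m$ and whose $(2,1),(3,1),(3,2)$ entries lie in $\mi^{c_1},\mi^{c_3},\mi^{c_2}$ respectively; it remains to show $\cK=N_cT^m\M$. The inclusion $N_cT^m\M\subseteq\cK$ is checked by multiplying out a product $ntu$ and comparing each entry with the description of $\cK$, the diagonal entries staying in $1+\mi^m$ because the off-diagonal entries of $n\in N_c$ have valuation $\ge m$. For the reverse inclusion I would run an $LU$-type reduction: since $g_{11}\in\cO^\times$ for $g\in\cK$, left multiplication by a suitable element of $N_c^{-1}$ clears the first column, and the new $(2,2)$-entry is still in $1+\mi^m$ (the correction lies in $\mi^{c_1}$), so left multiplication by a second element of $N_c^{-1}$ clears the $(3,2)$-entry; the outcome is upper triangular with diagonal in $1+\mi^m$ and hence equals $tu$ with $t\in T^m$ and $u\in\M$. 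Since these two elementary factors and their product again lie in $N_c$ — here one uses $c_1+c_2\ge c_3$, which is also what makes $N_c$ a group — this exhibits $g$ as an element of $N_cT^m\M$ and finishes the proof.
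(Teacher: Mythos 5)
Your proof is correct and follows the same route as the paper's, which simply asserts that $\eta$ is a surjective homomorphism with the given kernel as a ``straightforward matrix multiplication'' and a ``short matrix computation'' under the inequalities extracted from $\mu(c)\ge m$. You supply those computations in full, and the supplementary observation that $\eta$ is conjugation by $\mathrm{diag}(1,\pi^{c_1-m},\pi^{c_3-m})$ followed by reduction mod $\mi^m$ and renormalisation of the $(3,2)$-entry is a clean way to see where the twist $\delta$ comes from, though not logically required.
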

\begin{proof} First, note that the condition $\mu(c)\ge m$ implies that $c-m\rho \in \cone$, hence $P_{c-m\rho}$ is indeed a group. Clearly, $\eta$ is surjective and a straightforward matrix multiplication shows that $\eta$ is a homomorphism if and only if 
\begin{itemize}
\item [(a)] $c_1-m,c_2-m,c_3-m \ge m$, and
\item [(b)] $c_3-c_1,c_3-c_2,c_1+c_2-c_3 \ge m$.
\end{itemize} 
The condition (b) is equivalent to $\mu(c) \ge m$ and (a) follows from (b). Finally, it is clear that $N_{c} T^m N^{+}$ is contained in the kernel of $\eta$ and a short matrix computation shows equality. 
\end{proof}

\begin{thm}
\label{thm:identification-of-actions} Let $c=(c_1, c_2, c_3)  \in \cone $ and let $m \in \mathbb N$ such that $\mu(c) \geq m$. Let $\delta = \pi ^{\kinv(c-\bm)} = \pi^{c_1 + c_2 -c_3 -m }$. Then
\begin{itemize}
\item[(a)] The action of $P_{c-\bm}$ on $P_{c-\bm}/P_{c}$ factors through $\bmk$ via $\eta$.
\item[(b)] As $\bmk$-spaces $P_{c-\bm}/P_{c} \simeq \bmk/T_m$. In particular,  we have
\[
U_{c}^m \simeq \ind_{T_m}^{\bmk}(\triv),
\]
as $\bmk$-representations.
\end{itemize}
\end{thm}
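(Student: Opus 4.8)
The plan is to deduce the theorem from the previous lemma, which already did the hard work of exhibiting the surjective homomorphism $\eta: P_{c-\bm} \to \bmk$ with kernel $N_c T^m \M$. For part (a), I would observe that the action of $P_{c-\bm}$ on the coset space $P_{c-\bm}/P_c$ factors through a given quotient precisely when the kernel of that quotient map acts trivially, i.e. when $\ker\eta \subseteq P_c$ (since then $\ker\eta$ lies in every conjugate $gP_cg^{-1}$ as well — more precisely one needs $\ker\eta$ to be contained in the intersection of all conjugates of $P_c$ inside $P_{c-\bm}$, but since $\ker\eta = N_c T^m \M$ is normal in $P_{c-\bm}$, being the kernel of a homomorphism, it suffices to check $\ker\eta \subseteq P_c$). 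So the first concrete step is the containment $N_c T^m \M \subseteq P_c$: this is a direct matrix inspection comparing the entry-wise valuation conditions defining $N_c$, $T^m = T\cap K^m$, $\M$ with those defining $P_c$, using $\mu(c)\geq m$ (which gives $c_i - m \geq 0$, indeed $\geq m$, by condition (a) in the lemma's proof) so that the lower-triangular entries of $N_c$, which carry valuation $c_1, c_2, c_3$ respectively, meet the requirements $\mi^{c_1}, \mi^{c_2}, \mi^{c_3}$ of $P_c$, while $\M$ contributes only to the upper-triangular part where $P_c$ imposes no constraint, and $T^m$ contributes diagonal units. Once $\ker\eta \subseteq P_c$ is established, the action on $P_{c-\bm}/P_c$ descends to an action of $\bmk = P_{c-\bm}/\ker\eta$.

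For part (b), having identified the $\bmk$-action, I would show the stabilizer in $\bmk$ of the identity coset $P_c \in P_{c-\bm}/P_c$ is exactly $T_m$, the image of the diagonal torus. Under $\eta$, the stabilizer of $P_c$ is $\eta(P_c)$ (the image of $P_c$, which contains $\ker\eta$, so this image is well-defined as a subgroup of $\bmk$ and equals $P_c/\ker\eta$). Now I claim $\eta(P_c) = T_m$. One inclusion is immediate: the diagonal matrices $T$ of $P_{c-\bm}$ map onto $T_m$ and lie in $P_c$. For the reverse, take an element of $P_c$ and, using the parametrization in \eqref{eq:Pcmktobmk}, write its off-diagonal lower entries as $\pi^{c_1-m}x$, $\pi^{c_2-m}y$, $\pi^{c_3-m}z$; membership in $P_c$ forces these entries to have valuations at least $c_1, c_2, c_3$, hence $x,y,z$ each have valuation $\geq m$, so $x\equiv y\equiv z\equiv 0 \bmod \mi^m$ and $\eta$ sends the element into $T_m$. (The upper-triangular entries, carrying the $\star$'s, are unconstrained by $P_c$ but also killed by $\eta$, so they do not interfere.) Therefore $P_{c-\bm}/P_c \simeq \bmk/\eta(P_c) = \bmk/T_m$ as $\bmk$-sets, and applying $\ind$ from the trivial representation on both sides gives $U_c^m = \ind_{P_c}^{P_{c-\bm}}(\triv) \simeq \ind_{T_m}^{\bmk}(\triv)$, where the left equality is the definition \eqref{eq:def.of.Udm.Vdm} of $U_c^m$ and the transfer of the induction through $\eta$ uses that induction of the trivial character commutes with passing to quotients (a permutation representation on $G/H$ only depends on $G$ and $H$ through their images modulo a common normal subgroup contained in $H$).

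The main obstacle — though it is more bookkeeping than a genuine difficulty — is keeping the valuation conditions straight across the two different conventions: the entries of $P_{c-\bm}$ are written with explicit powers $\pi^{c_i - m}$ factored out in \eqref{eq:Pcmktobmk}, whereas membership in $P_c$ is phrased directly in terms of ideals $\mi^{c_i}$ of the original (un-factored) matrix entries. I would set up a small table matching $(3,1), (2,1), (3,2)$ entries and their required valuations under $P_{c-\bm}$, under $P_c$, and under $\ker\eta$, and verify the two inclusions $\ker\eta\subseteq P_c$ and $\eta(P_c)\subseteq T_m$ read off from it; the hypothesis $\mu(c)\geq m$, equivalently condition (b) of the lemma, is exactly what makes both work. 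Everything else — normality of $\ker\eta$, the factoring-through criterion, the stabilizer computation, and the compatibility of $\ind$ with quotients — is standard.
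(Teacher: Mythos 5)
Your proposal is correct, and it takes a cleaner path than the paper's own proof of this theorem. The paper deduces the theorem from a commutative diagram built on Lemma~\ref{lem: decomposition of parabolic} (the decomposition $P_{c-\bm} = N_{c-\bm} T \M$, the normality $N_c \lhd N_{c-\bm}$, and $[T^m, N_{c-\bm}] \subset N_c$) and Proposition~\ref{prop:Ncm-action}, the latter of which proves that $\M$ acts trivially on $P_{c-\bm}/P_c$ by case-by-case conjugation computations with elementary unipotents via \eqref{conjugation.unipotent.formula}. Your argument shortcuts this entirely: since $\ker\eta = N_c T^m \M$ is automatically normal in $P_{c-\bm}$ (being a kernel) and visibly contained in $P_c$ (each of the three factors lies in $P_c$, which is a group — one does not even need $\mu(c)\ge m$ for this inclusion, only for $\eta$ to be a homomorphism at all), the normal core of $P_c$ contains $\ker\eta$, so $\ker\eta$ acts trivially, proving (a) without any commutator calculations. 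For (b) you identify the point stabilizer in $\bmk$ as $\eta(P_c)$ and compute it to be $T_m$ directly from the valuation conditions — the paper reaches the same conclusion by reading off $P_c/\ker\eta \cong T/T^m \cong T_m$ from the diagram. Both are fine; your version buys a shorter argument that only uses the preceding lemma, whereas the paper's diagram also sets up the identification $\eta(P_d)=Q^\delta_{d-c+\bm}$ that is reused immediately in Corollary~\ref{cor:Bd-c} (your stabilizer computation extends to that with the same valuation bookkeeping, so nothing is lost).
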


Before proving Theorem~\ref{thm:identification-of-actions} we setup notation and make some preparation.
Let $\et_{ij}$ denote the matrix whose $(i,j)^{\text{th}}$ entry equal one and the rest of entries are zero. For $i \ne j$ and $x\in R$ (a commutative ring) let
\begin{equation}\label{elementary.elements.def}
\begin{split}
\ut_{ij}(x) &= \Id + x\et_{ij}, \qquad \text{(elementary unipotent)}, \\
\st_{i}(x)& =\Id+(x-1)\et_{ii}, \qquad \text{(elementary semisimple)}.
\end{split}
\end{equation}
The following are well known (and easily verified).
\begin{equation}\label{conjugation.unipotent.formula}
\ut_{ij}(x) \ut_{kl}(y)\ut_{ij}(x)^{-1} =\Id + y\et_{kl} +\left\{
                                     \begin{array}{ll}
                                       0, & \hbox{if $i\ne l,j \ne k$;} \\
                                       -xy \et_{kj}, & \hbox{if $i = l,j \ne k$;} \\
                                       xy \et_{il}, & \hbox{if $i\ne l,j = k$;} \\
                                       xy(\et_{ll}-\et_{kk})-x^2y \et_{ij}, & \hbox{if $i = l,j = k$.}
                                     \end{array}
                                   \right.
\end{equation}

for all $x,y \in R$, $i\ne j$ and $k \ne l$.

\begin{equation}\label{commutator.semisimple.unipotent}
[\ut_{ij}(x),\st_k(y)]=\left\{
                     \begin{array}{ll}
                       \Id, & \hbox{if $i\ne k, j\ne k$;} \\
                       \ut_{ij}(x(1-y)), & \hbox{if $i=k$;} \\
                       \ut_{ij}(x(1-y^{-1})), & \hbox{if $j=k$,}
                     \end{array}
                   \right.
\end{equation}
for all $x \in R$, $y \in R^\times$.
\begin{lem}
\label{lem: decomposition of parabolic} For all $c \in \cone$ and $m \in \mathbb N$ with $\mu(c) \geq m$ the following hold.
\begin{enumerate}
\item $P_{c-\bm} = N_{c-\bm} T \M$.
\item $N_{c} \lhd N_{c-\bm}$.
\item $[T^m,N_{c-\bm}] \subset N_{c}$.
\end{enumerate}
\end{lem}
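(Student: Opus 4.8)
The three assertions are structural facts about the \lq\lq parabolic\rq\rq\ subgroups $P_c$ and the diagonal/congruence subgroups, and the natural strategy is to establish all three by direct, but organized, matrix computations using the elementary-element formulas \eqref{conjugation.unipotent.formula} and \eqref{commutator.semisimple.unipotent} together with the valuation bookkeeping that the hypothesis $\mu(c)\ge m$ provides. Throughout, write $c=(c_1,c_2,c_3)$ and note (as in the previous lemma) that $\mu(c)\ge m$ is exactly condition (b) there, i.e.\ $c_3-c_1,\ c_3-c_2,\ c_1+c_2-c_3\ \ge m$; this also forces $c-\bm\in\cone$ so that $P_{c-\bm}$ is genuinely a group, and forces $c_i-m\ge m$ so that all the off-diagonal entries of $N_{c-\bm}$ lie in $\mi^m$.

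\medskip
For (1), I would argue that $P_{c-\bm}=N_{c-\bm}T\M$ is just the \lq\lq lower $\times$ torus $\times$ upper\rq\rq\ factorization of an element of $P_{c-\bm}$. Given $g\in P_{c-\bm}$, the three diagonal entries are units (since $g$ is invertible over $\cO$ and its reduction mod $\mi$ is lower-triangular invertible), so one may left-multiply by a suitable element of $N_{c-\bm}$ to clear the $(2,1),(3,1),(3,2)$ entries; here one must check that the multipliers needed (ratios of the $(i,j)$-entry by a diagonal unit) still lie in $\mi^{(c-\bm)_k}$ for the appropriate $k$, which is immediate because the $(i,j)$-entry of $g$ already does and the diagonal entries are units. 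After clearing below the diagonal one is left with an upper-triangular matrix with unit diagonal and entries in $\cO$ above, which factors as $T\cdot\M$. Uniqueness is not asserted, so I would not belabor it.

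\medskip
For (2) and (3) I would compute conjugates using \eqref{conjugation.unipotent.formula} and \eqref{commutator.semisimple.unipotent}. Writing a general element of $N_{c-\bm}$ as a product $\ut_{21}(a)\ut_{31}(b)\ut_{32}(d)$ with $\val(a)\ge c_1-m$, $\val(b)\ge c_3-m$, $\val(d)\ge c_2-m$ (and a general element of $N_c$ similarly with exponents $c_1,c_3,c_2$), conjugation by such a product only creates new $(3,1)$-entries out of products like $a d$ (from the $i=l,j=k$ case with $(i,j)=(3,2),(k,l)=(2,1)$), whose valuation is $\ge (c_1-m)+(c_2-m)$. To land inside $N_c$ I need $(c_1-m)+(c_2-m)\ge c_3$, i.e.\ $c_1+c_2-c_3\ge 2m$ --- which is \emph{not} what $\mu(c)\ge m$ gives in general; here I expect the main subtlety, and the fix is that for (2) one conjugates an element of $N_c$ (exponents $c_1,c_3,c_2$) by an element of $N_{c-\bm}$ (exponents $c_1-m,c_2-m,c_3-m$), so the cross-term has valuation $\ge (c_2-m)+c_1\ge c_3$ precisely when $c_1+c_2-c_3\ge m$, which \emph{is} $\mu(c)\ge m$; likewise the $(2,1)$- and $(3,2)$-entries are only shifted within $\mi^{c_1}$ resp.\ $\mi^{c_2}$ (or acquire terms of even higher valuation), so $N_c$ is stable. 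For (3), an element of $T^m$ is a product of $\st_i(t_i)$ with $t_i\in 1+\mi^m$; by \eqref{commutator.semisimple.unipotent}, $[\,\ut_{ij}(x),\st_k(t)\,]$ is $\ut_{ij}(x(1-t^{\pm1}))$ when $k\in\{i,j\}$ and trivial otherwise, and $1-t^{\pm1}\in\mi^m$, so the commutator of $T^m$ with $\ut_{ij}(x)$ (for $x$ of valuation $\ge (c-\bm)_{\bullet}$) has the $(i,j)$-entry pushed up by $\mi^m$, landing in $\mi^{(c-\bm)_\bullet+m}=\mi^{c_\bullet}$; collecting over the three generators $(i,j)=(2,1),(3,1),(3,2)$ and using (2) to absorb the resulting products gives $[T^m,N_{c-\bm}]\subseteq N_c$. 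The one computational point to watch is that when one expands a commutator of a \emph{product} of elementary elements one also generates the quadratic cross-terms of \eqref{conjugation.unipotent.formula}; these have strictly larger valuation than the linear terms and so cause no trouble, but it is worth a sentence to say so.

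\medskip
In short: (1) is the standard Bruhat-type factorization with a one-line valuation check; (2) and (3) are commutator computations whose only real content is the inequality $c_1+c_2-c_3\ge m$ controlling the $(3,1)$-entry, which is exactly the hypothesis $\mu(c)\ge m$. The anticipated obstacle is purely bookkeeping --- making sure that in every cross-term the two valuation contributions add up to at least the target exponent $c_1,c_2$ or $c_3$ --- and the key observation that resolves it is that the \lq\lq gap\rq\rq\ between $N_{c-\bm}$ and $N_c$ is exactly $m$ in each coordinate, matched against the single binding inequality $\kinv(c)=c_1+c_2-c_3\ge m$.
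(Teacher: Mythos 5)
Your proposal is correct and follows essentially the same route as the paper: (1) via the lower--torus--upper factorization, (2) and (3) via the elementary conjugation and commutator formulas \eqref{conjugation.unipotent.formula}, \eqref{commutator.semisimple.unipotent}, with the single binding valuation inequality $\kinv(c)=c_1+c_2-c_3\ge m$ controlling the new $(3,1)$-entry. The paper's proof of (2) simply reduces to the two non-commuting pairs of elementary unipotent generators (one from $N_{c-m\rho}$, one from $N_c$), which is exactly the corrected bookkeeping you arrive at after your initial false start conjugating two $N_{c-m\rho}$ elements.
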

\begin{proof} \qquad
\begin{enumerate}
\item The map $N_{c-\bm}\times  T \times  \M \to P_{c-\bm}$ given by $(n,t,n^+) \mapsto ntn^+$ is a bijection.

\item Since both groups are generated by elementary unipotents \eqref{elementary.elements.def} it is enough to check that $g n g^{-1} \in N_{c}$ for elementary unipotents $g \in N_{c-\bm}$, $n \in N_{c}$. There are only two pairs of such elements which do not commute and for them we verify using \eqref{conjugation.unipotent.formula} that
\[
\begin{split}
\ut_{21}(\pi^{c_1-m}x) \ut_{32}(\pi^{c_2}y)\ut_{21}(\pi^{c_1-m}x)^{-1}& = \Id + \pi^{c_2}y\et_{32} - \pi^{c_1+c_2-m}xy \et_{31} \in N_{c}, \\
\ut_{32}(\pi^{c_2-m}x) \ut_{21}(\pi^{c_1}y)\ut_{32}(\pi^{c_2-m}x)^{-1}&=\Id + \pi^{c_1}y\et_{21} + \pi^{c_1+c_2-m}xy \et_{31} \in N_{c}.
\end{split}
\]

\item Follows immediately from \eqref{commutator.semisimple.unipotent}.

\end{enumerate}

\end{proof}

\begin{prop}\label{prop:Ncm-action}
 For all $c \in \cone $ and $m \in \mathbb N$ with $\mu(c) \geq m$ the following hold.
\begin{enumerate}
\item As $N_{c-\bm}$-spaces $P_{c-\bm}/P_{c} \simeq N_{c-\bm}T/N_{c} T\simeq  N_{c-\bm}/N_{c}$.
\item The group $N_{c} T^m \M$ is contained in the kernel of the $P_{c-\bm}$-action on $P_{c-\bm}/P_{c}$.
\end{enumerate}
\end{prop}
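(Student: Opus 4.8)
The plan is to obtain (1) by an orbit--stabilizer (Mackey) reduction built on the factorization $P_{c-\bm}=N_{c-\bm}T\M$ of Lemma~\ref{lem: decomposition of parabolic}, and to read off (2) from the description of $\ker\eta$.

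For (1), I would first record the elementary fact that conjugation by a diagonal matrix $t=(t_1,t_2,t_3)\in T$ multiplies the $(i,j)$ entry of any matrix by the unit $t_it_j^{-1}$; hence $T$ normalizes both $N_{c-\bm}$ and $N_c$, so $N_{c-\bm}T$ and $N_cT$ are subgroups of $P_{c-\bm}$ with $N_cT\subseteq N_{c-\bm}T$, and every element of $N_{c-\bm}T$ has the form $n\tau$ with $n\in N_{c-\bm}$, $\tau\in T$. Since $\M$ and $T$ lie in $B\subseteq P_c$ and $P_c\subseteq P_{c-\bm}$, Lemma~\ref{lem: decomposition of parabolic}(1) gives $P_{c-\bm}=N_{c-\bm}T\M\subseteq(N_{c-\bm}T)P_c\subseteq P_{c-\bm}$, i.e.\ $P_{c-\bm}=(N_{c-\bm}T)P_c$. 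Therefore the subgroup $N_{c-\bm}T$ already acts transitively on $P_{c-\bm}/P_c$ by left translation, the stabilizer of the base point being $N_{c-\bm}T\cap P_c$, and the resulting orbit--stabilizer bijection is $N_{c-\bm}T$-equivariant, in particular $N_{c-\bm}$-equivariant. A direct valuation check identifies the stabilizer: for $n\in N_{c-\bm}$ and $\tau\in T$, the matrix $n\tau$ is lower triangular with unit diagonal, and it lies in $P_c$ exactly when the $(2,1)$, $(3,1)$ and $(3,2)$ entries of $n$ lie in $\mi^{c_1}$, $\mi^{c_3}$ and $\mi^{c_2}$ respectively, i.e.\ exactly when $n\in N_c$; so $N_{c-\bm}T\cap P_c=N_cT$ and $P_{c-\bm}/P_c\simeq N_{c-\bm}T/N_cT$. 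Finally, the inclusion $N_{c-\bm}\hookrightarrow N_{c-\bm}T$ induces an $N_{c-\bm}$-equivariant map $N_{c-\bm}/N_c\to N_{c-\bm}T/N_cT$ (recall $N_c\lhd N_{c-\bm}$ by Lemma~\ref{lem: decomposition of parabolic}(2)); it is onto since $n\tau N_cT=nN_cT$, and one-to-one since an element of $N_cT$ that is unipotent must have trivial $T$-part, whence $N_{c-\bm}\cap N_cT=N_c$. Composing the two maps gives the chain of $N_{c-\bm}$-isomorphisms in (1).

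For (2), I would invoke the lemma that constructs $\eta$: it states $N_cT^m\M=\ker\bigl(\eta\colon P_{c-\bm}\twoheadrightarrow\bmk\bigr)$, so $N_cT^m\M$ is a \emph{normal} subgroup of $P_{c-\bm}$; and since $N_c$, $T^m$ and $\M$ all lie in the group $P_c$, so does their product $N_cT^m\M$. The kernel of the left-translation action of $P_{c-\bm}$ on $P_{c-\bm}/P_c$ is precisely the normal core $\bigcap_{g\in P_{c-\bm}}{}^gP_c$, i.e.\ the largest normal subgroup of $P_{c-\bm}$ contained in $P_c$; it therefore contains $N_cT^m\M$, which is exactly (2). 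The only genuine computation is the pair of intersection identities $N_{c-\bm}T\cap P_c=N_cT$ and $N_{c-\bm}\cap N_cT=N_c$ used in (1), and both are immediate from the matrix descriptions of $N_{c-\bm}$, $N_c$, $T$ and $P_c$, so I do not expect a real obstacle; the one point to be careful about is to derive (2) from the normality of $\ker\eta$ together with the containment $\ker\eta\subseteq P_c$, rather than re-verifying conjugation-invariance by hand.
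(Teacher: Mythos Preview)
Your argument for (1) is essentially the paper's: transitivity of $N_{c-\bm}$ on $P_{c-\bm}/P_c$ via the factorization $P_{c-\bm}=N_{c-\bm}T\M$ from Lemma~\ref{lem: decomposition of parabolic}(1), together with the stabilizer computation $N_{c-\bm}\cap P_c=N_c$. The paper passes directly from $P_{c-\bm}/P_c$ to $N_{c-\bm}/N_c$ without lingering on the intermediate quotient $N_{c-\bm}T/N_cT$, but the content is the same.

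For (2) you take a genuinely different route. The paper argues directly: $N_c$ and $T^m$ act trivially by Lemma~\ref{lem: decomposition of parabolic}(2),(3), and for $\M$ one must check that $n^{-1}hn\in N_cT\M$ for $h\in\M$ and $n\in N_{c-\bm}$, which is done by reducing to elementary unipotents and running through the nine pairs $(i,j)\times(k,l)$ using the commutator identities~\eqref{conjugation.unipotent.formula}. Your argument instead imports the lemma describing $\ker\eta=N_cT^m\M$, observes that kernels are normal and that $N_cT^m\M\subseteq P_c$, and concludes via the normal core. This is correct and cleaner, and there is no circularity since the $\eta$-lemma is proved by a self-contained matrix calculation before Proposition~\ref{prop:Ncm-action}. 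What the paper's hands-on approach buys is independence from $\eta$: Proposition~\ref{prop:Ncm-action} then stands on its own and feeds into the proof of Theorem~\ref{thm:identification-of-actions}, whereas your shortcut makes (2) logically downstream of the $\eta$-lemma and somewhat collapses the two results. Either organization is fine here.
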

\begin{proof}
(1) By Lemma~\ref{lem: decomposition of parabolic}, the group $N_{c-\bm}$ acts transitively on the left cosets of $P_{c}=N_{c} T \M$ in $P_{c-\bm}=N_{c-\bm}T \M$. The stabilizer of $P_{c}$ under this action is $N_{c}$. It follows that, as $N_{c-\bm}$-spaces, $P_{c-\bm}/P_{c} \simeq N_{c-\bm}/N_{c}$. This in particular implies that every element in $P_{c-\bm}/P_{c}$ can be represented as $n N_{c} T \M$ for some $n \in N_{c-\bm}/N_{c}$.

(2) It is clear from Lemma~\ref{lem: decomposition of parabolic}, that both $T^m$ and $N_{c}$ act trivially on $ P_{c-\bm}/P_{c}$. We show that $\M$ acts trivially as well. We need to show that $h(n N_{c}T \M)=n N_{c} T \M$ for $h \in \M$ and $n \in N_{c}$, or equivalently that $nhn^{-1} \in N_{c} T \M$. Since both groups $N_{c-\bm}$ and $\M$ are generated by unipotents it is enough to check that
\[
\ut_{ij}(x) \ut_{kl}(y)\ut_{ij}(x)^{-1} \in N_{c}T \M,
\]
for all $\ut_{ij}(x) \in N_{c-\bm}$ and $\ut_{kl}(y) \in \M$. The possible indices are $(i,j) \in \{(2,1),(3,1),(3,2)\}$ and $(k,l) \in \{(1,2),(1,3),(2,3)\}$. This is straightforward using \eqref{conjugation.unipotent.formula}. Thus $N_{c} T^m \M$ is contained in the  kernel.
\end{proof}

\begin{proof}[Proof of Theorem~\ref{thm:identification-of-actions}]
Using Lemma~\ref{lem: decomposition of parabolic} and Proposition~\ref{prop:Ncm-action} we have a commutative diagram
\[
\begin{matrix}
  N_{c}  & \times  & T^m  & \times & \M & \longrightarrow &  N_{c} T^m \M  \\
  \downarrow &    &  \downarrow & & \downarrow & & \downarrow \\
  N_{c-\bm}  & \times  & T  & \times & \M & \longrightarrow &  P_{c-\bm} \\
  \downarrow &    &  \downarrow & & \downarrow & & \downarrow \\
  N_{c-\bm}/ N_{c}  & \times  & T_m  & \times & \{\Id\} & \longrightarrow &  P_{c-\bm}/N_{c} T^m \M \\
  \wr| &    &  || & & || & & \wr| \\
  H^\delta_m  & \times  & T_m  & \times & \{\Id\} & \longrightarrow &  \bmk, \\
\end{matrix}
\]
with vertical maps being embeddings or canonical epimorphisms and horizontal bijective maps given by multiplication. The identification of the last two rows is induced by the map $\eta : P_{c-\bm} \to (\bmk,\circ_\delta)$. Assertions (a) and (b) follow.

\end{proof}

For $r=(r_1,r_2,r_3) \in \N_0^3$ with $r_3 \le r_1+r_2$ let
\begin{equation}\label{eq:QrNr}
N^\delta_{r}=\left[\begin{matrix} 1 &  &  \\ \mi^{r_1} & 1  &  \\ \mi^{r_3} &\mi^{r_2} & 1 \end{matrix} \right] \!\!\!\!\mod \mi^m, \qquad Q^\delta_{r}=\left[\begin{matrix} \lri^\times &  &  \\ \mi^{r_1} & \lri^\times  &  \\ \mi^{r_3} &\mi^{r_2} &\lri^\times  \end{matrix} \right] \!\!\!\!\mod \mi^m
\end{equation}
considered as a subgroup of $\bmk$. As $N^\delta_r$ does not intersect the center of $\bmk$ we also let $N^\delta_r$ denote its image in $\emk$. Let $\Theta$ denote the image of $T_m$ in $\emk$.

\begin{cor}\label{cor:Bd-c}  For all $c \in \cone $, $m \in \mathbb N$ with $\mu(c) \geq m$ and $d$ such that $c - m\rho \preccurlyeq d \preccurlyeq c $ we have
\[
U_d^m \simeq \ind_{Q^\delta_{d-c + \bm}}^{\bmk} (\triv),
\]
as $\bmk$-representations.
\[
U_d^m \simeq \ind_{N^\delta_{d-c + \bm}\Theta}^{\emk} (\triv),
\]
as $\emk$-representations. In particular, $U_{c}^m \simeq \ind_{\Theta}^{\emk} (\triv)$.

\end{cor}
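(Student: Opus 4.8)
The plan is to deduce Corollary~\ref{cor:Bd-c} from Theorem~\ref{thm:identification-of-actions} by a routine, if careful, double-coset bookkeeping. First I would fix $c \in \cone$ and $m$ with $\mu(c) \ge m$, set $\delta = \pi^{c_1+c_2-c_3-m}$ as in the theorem, and take $d$ with $c - m\rho \preccurlyeq d \preccurlyeq c$. By definition $U_d^m = \ind_{P_d}^{P_{c-\bm}}(\triv)$, and transitivity of induction together with Theorem~\ref{thm:identification-of-actions}(a) tells us that this representation factors through $\eta : P_{c-\bm} \to \bmk$ provided $P_d$ contains the kernel $N_c T^m \M$ of $\eta$; this containment holds because $d \succcurlyeq c-m\rho$ forces each diagonal block inequality needed, and the kernel $N_c T^m N^+$ is visibly inside $P_d$ (its off-diagonal entries are divisible by $\pi^{c_i - m} \ge \pi^{d_i - m}$). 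Hence $U_d^m \simeq \ind_{\eta(P_d)}^{\bmk}(\triv)$, and one identifies $\eta(P_d)$: feeding the matrix shape of $P_d$ through the formula~\eqref{eq:Pcmktobmk} scales the $(2,1),(3,2),(3,1)$ entries by $\pi^{-(c_1-m)}, \pi^{-(c_2-m)}, \pi^{-(c_3-m)}$ respectively, so $P_d$, whose corresponding entries carry $\mi^{d_1}, \mi^{d_2}, \mi^{d_3}$, maps onto the subgroup of $\bmk$ with entries in $\mi^{d_1 - c_1 + m}, \mi^{d_2 - c_2 + m}, \mi^{d_3 - c_3 + m}$ and arbitrary diagonal units; that is exactly $Q^\delta_{d - c + \bm}$ in the notation~\eqref{eq:QrNr} (note $d - c + m\rho \in \N_0^3$ with third coordinate $\le$ sum of the first two, so the expression is legal). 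This gives the first isomorphism.

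For the second isomorphism I would pass to $\emk = \bmk / Z_{\bmk}$. The center $Z_{\bmk}$ consists of the matrices $\st$ with $(3,1)$-entry free (and in the twisted case $\delta$ may fail to be invertible, so the center is $\{[(0,0,z),(1,1,1)]\}$, i.e.\ the $(3,1)$-slot). Since $Z_{\bmk} \subset T_m \cdot (\text{central unipotent}) \subset Q^\delta_r$ for every admissible $r$ — indeed every $Q^\delta_r$ contains the full $(3,1)$-entry $\mi^{r_3} \supseteq$ the relevant part, hence contains $Z_{\bmk}$ — the permutation module $\ind_{Q^\delta_r}^{\bmk}(\triv)$ is inflated from $\bmk/Z_{\bmk} = \emk$, and equals $\ind_{\bar{Q}^\delta_r}^{\emk}(\triv)$ where $\bar{Q}^\delta_r$ is the image of $Q^\delta_r$. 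Under the explicit isomorphism $\phi$ of Proposition~\ref{prop:stucture.of.bmk.mod.center}, $T_m$ maps to $\Theta$ and $Q^\delta_r$ maps to the subgroup generated by $N^\delta_r$ (the image of the unipotent part, which survives in $\emk$ since $N^\delta_r \cap Z_{\bmk}$ is only the $(3,1)$-part that gets killed) together with $\Theta$; that is, $\bar{Q}^\delta_r = N^\delta_r \Theta$. Applying this with $r = d - c + m\rho$ yields $U_d^m \simeq \ind_{N^\delta_{d-c+\bm}\Theta}^{\emk}(\triv)$. The final clause $U_c^m \simeq \ind_\Theta^{\emk}(\triv)$ is the special case $d = c$, since $d - c + m\rho = \bm$ and $Q^\delta_{\bm}$ consists of diagonal units with all off-diagonal entries in $\mi^m$, which is precisely $T_m$ (the $(3,1)$-entry lies in $\mi^m$, and modulo $\mi^m$ the whole unipotent part collapses to the identity in $\bmk$), so $N^\delta_{\bm}$ is trivial in $\emk$ and $\bar{Q}^\delta_{\bm} = \Theta$; this recovers Theorem~\ref{thm:identification-of-actions}(b) as stated.

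The only genuine point requiring care — the step I expect to be the main obstacle — is the identification $\eta(P_d) = Q^\delta_{d-c+\bm}$, together with the dual check that it is well-defined, i.e.\ that the reduction mod $\mi^m$ in~\eqref{eq:Pcmktobmk} does not destroy the group structure of $P_d$ for $d$ strictly between $c-m\rho$ and $c$. This is exactly where the hypothesis $\mu(c) \ge m$ (equivalently conditions (a),(b) in the lemma preceding Theorem~\ref{thm:identification-of-actions}) is used: one must recheck that $\eta$ restricted to $P_d$ is still a homomorphism, which amounts to verifying that the twist term in~\eqref{eq:multiplication.bkm} lands correctly — the $(3,1)$-entry picks up a $\delta y x' = \pi^{c_1+c_2-c_3-m} y x'$ contribution, and since $y, x'$ come from entries of $P_d$ with valuations $\ge d_2 - c_2 + m$ and $\ge d_1 - c_1 + m$, the product has valuation $\ge d_1 + d_2 - c_3 \ge d_3 - c_3 + m$ precisely because $d_1 + d_2 - d_3 \ge c_1 + c_2 - c_3 - m \ge 0$; one checks this and the symmetric estimates are all consequences of $d \in [c - m\rho, c]$ and $\mu(c) \ge m$. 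Granting this computation, everything else is bookkeeping, and the corollary follows.
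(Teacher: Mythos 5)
Your overall route mirrors the paper's one-line proof, and your identification $\eta(P_d) = Q^\delta_{d-c+\bm}$ is argued correctly. However, your treatment of the second isomorphism contains a genuine error: you misidentify the center of $\bmk$. The $(3,1)$-slot $\{[(0,0,z),(1,1,1)]\}$ is contained in the center of the twisted Heisenberg group $H^\delta_m$, but it is \emph{not} central in $\bmk = H^\delta_m \rtimes T_m$, because the torus acts on the $(3,1)$-coordinate by $z \mapsto t_1^{-1}t_3 z$. As the explicit kernel of $\phi$ in Proposition~\ref{prop:stucture.of.bmk.mod.center} shows, $Z_{\bmk}$ is the subgroup of scalar diagonals $\{[(0,0,0),(t,t,t)]\}$. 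This misidentification makes two of your subsequent claims false: $Q^\delta_r$ does \emph{not} contain ``the full $(3,1)$-entry'' (it contains only $\mi^{r_3}$ there), and $N^\delta_r \cap Z_{\bmk}$ is trivial rather than ``the $(3,1)$-part that gets killed'' --- indeed the paper explicitly relies on $N^\delta_r$ being disjoint from the center in order to view it inside $\emk$.

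The argument is repaired once the center is identified correctly: the scalar diagonals lie in $T_m \subset Q^\delta_r$, so $Z_{\bmk}$ acts trivially on $\bmk/Q^\delta_r$ and the permutation module descends to $\emk$; the image of $Q^\delta_r$ under $\phi$ is then $N^\delta_r\torusm$ as you say. A separate minor point: the worry in your last paragraph about whether $\eta$ restricted to $P_d$ remains a homomorphism is superfluous --- the restriction of a homomorphism to a subgroup is automatically a homomorphism and the image of a subgroup is automatically a subgroup, so the group structure of $\eta(P_d)$ requires no independent verification; the arithmetic you carry out there is really the (separate, and also automatic) check that $Q^\delta_{d-c+\bm}$ as defined in~\eqref{eq:QrNr} is closed under $\circ_\delta$.
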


\begin{proof} The first part follows from the proof of Theorem~\ref{thm:identification-of-actions} observing that $\eta(P_d)=Q^\delta_{d-c + \bm}$. The second part follows by observing that the center of $\bmk$ acts trivially.

\end{proof}


\section{On the permutation representation $\C[\emk/\torusm]$}\label{sec:bmk-mod-torus}
Let $E^\delta=\am \rtimes_\delta \gammam$, with
\[
\begin{split}
\am &= \left[ \begin{matrix} \com \\ \com \end{matrix}\right], \\
\gammam &=\left[ \begin{matrix} \com^\times & \\ \com & \com^\times \end{matrix}\right],
\end{split}
\]
and the action given by \eqref{eq:action.delta}. Let $\torusm$ stand for diagonal matrices in $\gammam$
\[
\torusm=\left[ \begin{matrix} \com^\times & \\ & \com^\times \end{matrix}\right].
\]

The aim of this section is to construct and analyze certain subrepresentations of the permutation representation  $\ind_{\torusm}^{\emk}(\triv)=\C[\emk/\torusm]$. To simplify notation we identify $\am$ and $\gammam$ with their images in $\emk$.
We start with the permutation representation of $\am\torusm$ on $\C[\am\torusm/\torusm]$. The group $\am\torusm$ is isomorphic to $a \in \am, \theta \in \torusm$. Let $\psi: \com \to \C^\times$ be a character which establishes the self duality $\com \simeq \widehat{\com}$, that is, every character of $\com$ is of the form $\psi_{\htx}:x \mapsto \psi(\htx x)$ for some $\htx \in \com$. The characters of $A$ are parameterized by pairs $a=(\htx,\htz) \in \com^2$ which correspond to the character $\varphi_{\htx,\htz}:(x,z) \mapsto \psi(\htx x+\htz z)$. Using the induced action of $\torusm$ on $\widehat{\am}$ we see that the orbits of $\torusm$ on $\widehat{A}$ are
\[
\Omega_{\ii\jj}=\{ \varphi_{\htx,\htz} \mid \val(\htx)=\ii, \val(\htz)=\jj\}, \qquad 0 \le \ii,\jj \le m.
\]
Fix $0 \le i,j \le m$ and $\varphi \in \Omega_{\ii\jj}$. The stabilizer of $\varphi$ in $\torusm$ is
\[
\begin{split}
\torusm_{\ii\jj}=\Stab_{\torusm}(\varphi)&=\{\mathrm{diag}(\theta_1,\theta_2) \mid  \theta_1 \equiv_{\pi^{m-\ii}} 1 , \,\,\theta_2\equiv_{\pi^{m-\jj}} 1 \} \\& \simeq 1+\pi^{m-\ii}\com \times 1+\pi^{m-\jj}\com,
\end{split}
\]
and we can extend $\varphi$ to a character of $A\torusm_{\ii\jj}$ using the trivial representation of $\torusm_{\ii\jj}$. We denote this extension by  $\varphi'$. Let
\begin{equation}\label{eq:def.of.Wij}
\cW_{\ii\jj}=\bigoplus_{\varphi \in \Omega_{\ii\jj}} \varphi'.
\end{equation}
It follows that $\cW_{\ii\jj}$ is an irreducible representation of $\am\torusm$ which is isomorphic to the induced representation $\ind_{\am\torusmij}^{\am\torusm}(\omega')$ for every $\omega \in \Omega_{\ii\jj}$.
It is the unique irreducible representation of $\am$ lying above the orbit $\Omega_{\ii\jj}$ which has a $\torusm$-fixed vector. Indeed, since $\torusm \backslash \am\torusm / \am \torusmij$ is a singelton, the space
\[
\Hom_{\am\torusm}\left(\cW_{ij},\ind_{\torusm}^{\am\torusm}(\triv)\right)\simeq\left\{f:\am\torusm\to \C \mid f(\theta g h)=\triv(\theta)f(g)\varphi'(h), \,\text{$\forall\theta \in \torusm$, $\forall g \in \am\torusm$, $\forall h \in \am\torusmij$}\right\}
\]
is at most one-dimensional. On the other hand a non-trivial intertwining function does exist since $\varphi=\triv$ on $\am\torusmij \cap \torusm = \torusmij$. We record the last discussion in

\begin{lem}\label{lem:indTA1} There exists decomposition to irreducible $\am\torusm$-representations
\[
\ind_{\torusm}^{\am\torusm}(\triv)=\bigoplus_{0 \le i,j \le m} \cW_{ij},
\]
where $\cW_{ij}$ is the unique irreducible representation of $\am\torusm$ which contains a $\torusm$-fixed vector and whose restriction to $\am$ is the sum of the characters in the orbit $\Omega_{\ii\jj}$. In particular, $\cW_{\ii\jj} \simeq \ind_{\am\torusmij}^{\am\torusm}(\varphi')$ for every $\varphi \in \Omega_{\ii\jj}$, and
\[
\dim  \cW_{\ii\jj}= |\Omega_{\ii\jj}|=|\pi^\ii \com^\times||\pi^\jj \com^\times|=q^{2m-2-\ii-\jj}(q-1)^2.
\]

\end{lem}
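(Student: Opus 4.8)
The plan is to establish the decomposition via a standard Mackey/Clifford-theory analysis of the permutation representation $\ind_{\torusm}^{\am\torusm}(\triv)$, using the action of $\torusm$ on $\widehat{\am}$ by conjugation. First I would restrict to $\am$ and use that $\am$ is abelian, so $\ind_{\torusm}^{\am\torusm}(\triv)|_{\am}$ decomposes as a sum of characters of $\am$; the characters appearing are precisely those that are trivial on $\am \cap \torusm = \{1\}$, i.e.\ all of $\widehat{\am}$, each with multiplicity one (this is just the regular representation of $\am$, since $[\am\torusm:\torusm]=|\am|$ and the restriction to $\am$ is $\C[\am]$). Grouping the characters of $\am$ into $\torusm$-orbits $\Omega_{\ii\jj}$ for $0\le \ii,\jj\le m$, Clifford theory tells us that the $\torusm$-stable subspace of $\C[\am\torusm/\torusm]$ sitting over $\Omega_{\ii\jj}$ is a sum of irreducible $\am\torusm$-representations each of whose restriction to $\am$ is $\bigoplus_{\varphi\in\Omega_{\ii\jj}}\varphi$ (with some multiplicity). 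This gives $\ind_{\torusm}^{\am\torusm}(\triv)=\bigoplus_{0\le \ii,\jj\le m} W_{\ii\jj}$ where $W_{\ii\jj}$ is the $\Omega_{\ii\jj}$-isotypic part, and it remains to identify $W_{\ii\jj}$ with the single irreducible $\cW_{\ii\jj}$ of \eqref{eq:def.of.Wij}.

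Next I would pin down the irreducible constituents of $W_{\ii\jj}$. Fix $\varphi\in\Omega_{\ii\jj}$ with stabilizer $\torusmij=\Stab_{\torusm}(\varphi)$. Since $\am$ is abelian and $\varphi$ extends (trivially on $\torusmij$) to a character $\varphi'$ of $\am\torusmij$, the little-groups method shows that the irreducibles of $\am\torusm$ lying over the orbit $\Omega_{\ii\jj}$ are exactly $\ind_{\am\torusmij}^{\am\torusm}(\varphi'\otimes\sigma)$ as $\sigma$ ranges over $\widehat{\torusm_{\ii\jj}}$ (here $\torusm/\torusmij$ acts freely-transitively on $\Omega_{\ii\jj}$, so there are no multiplicities coming from inertia and $\torusmij$ is abelian so its irreducibles are characters). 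Among these, the ones containing a $\torusm$-fixed vector are singled out by Frobenius reciprocity: $\Hom_{\am\torusm}(\ind_{\am\torusmij}^{\am\torusm}(\varphi'\otimes\sigma),\ind_{\torusm}^{\am\torusm}(\triv))$ is governed by the double cosets $\torusm\backslash\am\torusm/\am\torusmij$, which is a singleton, so this $\Hom$ space has dimension equal to $\dim\Hom_{\torusmij}(\varphi'\otimes\sigma,\triv)$, which is $1$ if $\sigma=\triv$ (recall $\varphi|_{\torusmij}=\triv$ because $\torusmij$ stabilizes $\varphi$ — this is the key point) and $0$ otherwise. Hence exactly one constituent over $\Omega_{\ii\jj}$ has a $\torusm$-fixed vector, namely $\cW_{\ii\jj}:=\ind_{\am\torusmij}^{\am\torusm}(\varphi')$, and since $W_{\ii\jj}\subseteq\ind_{\torusm}^{\am\torusm}(\triv)$ consists of $\torusm$-fixed... — more precisely, comparing $\am$-restrictions forces $W_{\ii\jj}=\cW_{\ii\jj}$ (both restrict to $\bigoplus_{\Omega_{\ii\jj}}\varphi$ with multiplicity one, since the full induced representation restricts to the multiplicity-free $\C[\am]$).

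For the dimension count, I would compute $\dim\cW_{\ii\jj}=[\am\torusm:\am\torusmij]=[\torusm:\torusmij]=|\Omega_{\ii\jj}|$, and then evaluate $|\Omega_{\ii\jj}|=|\pi^{\ii}\com^\times|\cdot|\pi^{\jj}\com^\times|$ using that an element of $\com=\lri/\mi^m$ of valuation exactly $\ii$ forms a set of size $|\pi^{\ii}\com|-|\pi^{\ii+1}\com|=q^{m-\ii}-q^{m-\ii-1}=q^{m-\ii-1}(q-1)$ for $\ii<m$ (with $q=|\lri/\mi|$), giving the stated $q^{2m-2-\ii-\jj}(q-1)^2$. (Strictly one should note the edge cases $\ii=m$ or $\jj=m$, where $\pi^{\ii}\com^\times$ should be read as $\{0\}$; the formula as written applies in the generic range and the authors evidently intend $\ii,\jj<m$ there, or one adopts the convention making it consistent.) The main obstacle — though it is more bookkeeping than genuine difficulty — is verifying cleanly that the orbit $\Omega_{\ii\jj}$ is precisely the stated set and that the stabilizer is $1+\pi^{m-\ii}\com\times 1+\pi^{m-\jj}\com$; this follows by writing out the $\torusm$-action $\mathrm{diag}(\theta_1,\theta_2)\cdot\varphi_{\htx,\htz}=\varphi_{\theta_1^{-1}\htx,\,\theta_2^{-1}\htz}$ (read off from \eqref{eq:action.delta} restricted to $\torusm$ and dualized) and observing that $\val$ is a complete $\com^\times$-orbit invariant on $\com$, and that $\theta\htx=\htx$ in $\com$ iff $\theta\equiv 1 \pmod{\mi^{m-\val(\htx)}}$.
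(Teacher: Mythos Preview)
Your proposal is correct and follows essentially the same approach as the paper: both arguments decompose the restriction to $\am$ into $\torusm$-orbits $\Omega_{\ii\jj}$, invoke Clifford/Mackey (the little-groups method) to list the irreducibles over each orbit, and then use the singleton double coset $\torusm\backslash \am\torusm/\am\torusmij$ together with the intertwining criterion to single out $\cW_{\ii\jj}=\ind_{\am\torusmij}^{\am\torusm}(\varphi')$ as the unique constituent with a $\torusm$-fixed vector. Your treatment is slightly more expansive (you spell out the regular representation identification and the edge cases $\ii=m$ or $\jj=m$), but the substance is the same.
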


We now induce the representations $\cW_{ij}$ further to $\emk$, and define
\begin{equation} \label{eq:def.of.tildeWij}
\widetilde{\cW}_{ij}=\ind_{\am\torusm}^{\emk}(\cW_{ij}).
\end{equation}
Our next goal is to find the irreducible components of $\widetilde{\cW}_{00}$.

\begin{lem}\label{lem:stab.of.varphi.in.emk} Let $\varphi_{\htx,\htz} \in \Omega_{00}$ and denote $\epsilon=\delta\htx^{-1}\htz$. The stabilizer of $\varphi_{\htxz}$ in $\gammam$ is
equal to $\Delta_m^\epsilon$, where
\begin{enumerate}
\item [(a)] If $\delta \in \com^\times$
\[
\Delta^\epsilon_m=\left\{ \left[ \begin{smallmatrix} \alpha & \\\beta & 1 \end{smallmatrix}\right] \mid \alpha \in \com^\times, \beta = \epsilon^{-1}(1-\alpha) \right\} \simeq \com^\times.
\]
\item [(b)] If $\delta \in \pi\com$
\[
\Delta^\epsilon_m=\left\{ \left[ \begin{smallmatrix} \alpha &  \\ \beta & 1 \end{smallmatrix}\right] \mid \beta \in \com, \alpha=1-\epsilon \beta \right\} \simeq \com.
\]
\end{enumerate}
\end{lem}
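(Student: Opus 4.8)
The plan is to compute directly the condition for an element $\gamma = \left[\begin{smallmatrix}\alpha & \\ \beta & \gamma_3\end{smallmatrix}\right] \in \gammam$ to stabilize the character $\varphi_{\htx,\htz}$ of $\am$, using the action \eqref{eq:action.delta}. First I would note that since $\varphi_{\htx,\htz}(x,z) = \psi(\htx x + \htz z)$ and $\psi$ establishes self-duality, the element $\gamma$ stabilizes $\varphi_{\htx,\htz}$ precisely when $\varphi_{\htx,\htz}\bigl(\gamma^{-1}\circ_\delta(x,z)\bigr) = \varphi_{\htx,\htz}(x,z)$ for all $(x,z)$, equivalently when the transpose-inverse action of $\gamma$ on the parameter pair $(\htx,\htz) \in \com^2$ fixes it. From \eqref{eq:action.delta} the action of $\left[\begin{smallmatrix}\alpha & \\ \beta & \gamma_3\end{smallmatrix}\right]$ on a column $\left[\begin{smallmatrix}x\\z\end{smallmatrix}\right]$ is $\left[\begin{smallmatrix}\alpha x\\ \beta x\delta + \gamma_3 z\end{smallmatrix}\right]$, so the dual (contragredient) action on $(\htx,\htz)$ is by the transpose matrix $\left[\begin{smallmatrix}\alpha & \beta\delta\\ & \gamma_3\end{smallmatrix}\right]$, i.e. $(\htx,\htz) \mapsto (\alpha\htx + \beta\delta\htz,\ \gamma_3\htz)$ (up to the usual inverse, which only swaps the roles of $\gamma$ and $\gamma^{-1}$ and does not affect the stabilizer).

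Next I would impose $\varphi_{\htx,\htz} \in \Omega_{00}$, which by definition means $\val(\htx) = \val(\htz) = 0$, so both $\htx$ and $\htz$ are units in $\com$. The fixed-point equations become $\gamma_3\htz = \htz$ and $\alpha\htx + \beta\delta\htz = \htx$. The first gives $\gamma_3 = 1$ immediately (as $\htz \in \com^\times$). The second, dividing by $\htx \in \com^\times$ and writing $\epsilon = \delta\htx^{-1}\htz$, becomes $\alpha + \beta\epsilon = 1$, i.e. $\beta\epsilon = 1-\alpha$. This is the single defining relation. Now the two cases split according to whether $\epsilon$ is a unit. Since $\htx,\htz \in \com^\times$ we have $\val(\epsilon) = \val(\delta)$, so $\epsilon \in \com^\times$ exactly when $\delta \in \com^\times$, and $\epsilon \in \pi\com$ exactly when $\delta \in \pi\com$. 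In case (a), $\epsilon$ is invertible, so the relation $\beta\epsilon = 1-\alpha$ determines $\beta = \epsilon^{-1}(1-\alpha)$ freely in terms of $\alpha \in \com^\times$, giving $\Delta^\epsilon_m \simeq \com^\times$ via $\gamma \mapsto \alpha$; one checks this parametrization is a group homomorphism since the group law on such lower-triangular matrices with $\gamma_3 = 1$ multiplies the $\alpha$-entries. In case (b), $\epsilon \in \pi\com$; here I would instead solve for $\alpha = 1 - \epsilon\beta$, which is automatically a unit since $\epsilon\beta \in \pi\com$, so $\beta \in \com$ is the free parameter and $\Delta^\epsilon_m \simeq \com$, additively, via $\gamma \mapsto \beta$ — again a routine check that the correspondence respects the group law.

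The only mild subtlety — and the one point I would be careful about — is bookkeeping of which action (the action on vectors, on characters, or their inverses) is in play, so that the final relation comes out as $\beta\epsilon = 1-\alpha$ rather than some inverted variant; but since the stabilizer of a character equals the stabilizer under the inverse action, and the group $\gammam$ is its own image under $\gamma\mapsto\gamma^{-1}$ (it is a group), this does not change the answer, only potentially the bijective relabeling. Beyond that the argument is an elementary finite-ring computation, and no genuine obstacle arises.
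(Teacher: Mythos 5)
Your computation follows the same essential route as the paper's proof: apply the action~\eqref{eq:action.delta} (you pass to the dual/transpose action on the character parameters, but correctly note that the inverse ambiguity does not affect the stabilizer), use $\htx,\htz\in\com^\times$ because $\varphi_{\htx,\htz}\in\Omega_{00}$, deduce $\gamma_3=1$ from the second coordinate, and reduce the first coordinate to $\alpha+\epsilon\beta=1$ with $\epsilon=\delta\htx^{-1}\htz$; the case split according to whether $\epsilon$ is a unit then gives the description of $\Delta^\epsilon_m$. This is the paper's argument, and your set-theoretic description of the stabilizer in both cases is correct.

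The one place where the proposal goes wrong is the claimed isomorphism in case (b). You say $\Delta^\epsilon_m\simeq\com$ "additively, via $\gamma\mapsto\beta$" and describe it as "a routine check that the correspondence respects the group law." It does not: computing $\left[\begin{smallmatrix}1-\epsilon\beta_1 & \\ \beta_1 & 1\end{smallmatrix}\right]\left[\begin{smallmatrix}1-\epsilon\beta_2 & \\ \beta_2 & 1\end{smallmatrix}\right]$ gives lower-left entry $\beta_1(1-\epsilon\beta_2)+\beta_2=\beta_1+\beta_2-\epsilon\beta_1\beta_2$, so the induced law on the $\beta$-parameter is the twisted (multiplicative-type) law $\beta_1\ast\beta_2=\beta_1+\beta_2-\epsilon\beta_1\beta_2$, which agrees with addition only when $\epsilon=0$. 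Your check in case (a) via $\gamma\mapsto\alpha$ does work (the upper-left entries multiply), but the analogue in case (b), $\gamma\mapsto\alpha=1-\epsilon\beta$, lands in $1+\epsilon\com\subsetneq\com^\times$ and is not injective, so neither parameterization immediately yields the stated isomorphism. The paper's own proof side-steps this by simply asserting the isomorphism type after solving the equations, so to match it you should either do the same or supply a genuine isomorphism (not the naive $\gamma\mapsto\beta$).
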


\begin{proof} Using the action \eqref{eq:action.delta} we have
\[
{}^{\left[\begin{smallmatrix} \alpha & \\ \beta & \gamma \end{smallmatrix} \right]}\varphi_{\htx,\htz}=\varphi_{\alpha\htx+ \delta \beta\htz,\gamma\htz},
\]
hence the stabilizer consists of elements $\left[\begin{smallmatrix} \alpha & \\ \beta & \gamma \end{smallmatrix} \right]$ whose entries solve the equations
\[
\begin{split}
\htx&=\alpha\htx+ \delta \beta\htz \\
\htz&=\gamma\htz.
\end{split}
\]
The solution depends on $\delta$ being a unit or not and is given by cases (a) and (b).

\end{proof}

\begin{thm}\label{thm:structure.tilde.W00}  The representation $\widetilde{\cW}_{00}$ of $\emk$ is multiplicity free with equidimensional irreducible constituents. More precisely,  $\widetilde{\cW}_{00}$ has a decomposition
\[
\widetilde{\cW}_{00} \simeq \bigoplus_{\sigma \in \Sigma} L_\sigma,
\]
where
\[
\Sigma=\left\{
                 \begin{array}{ll}
                   \widehat{\com^\times}, & \hbox{{if $\delta \in \com^\times$;}} \\
                   \widehat{\com}, & \hbox{if $\delta \in \pi\com$.}
                 \end{array}
               \right.
\]
The representations $L_\sigma$ are irreducible, non-equivalent and are induced from the one-dimensional extensions of $\varphi=\varphi_{1,1}$ to    $\mathrm{Stab}_{\emk}(\varphi)=A\dmk$. In particular, their dimension is
\[
\dim L_\sigma=[\emk: \mathrm{Stab}_{\emk}(\varphi)] =\left\{
                 \begin{array}{ll}
                   q^{2m-1}(q-1), & \hbox{{if $\delta \in \com^\times$;}} \\
                   q^{2m-2}(q-1)^2, & \hbox{if $\delta \in \pi\com$.}
                 \end{array}
               \right.
\]

\end{thm}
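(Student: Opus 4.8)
The plan is to compute the decomposition of $\widetilde{\cW}_{00}=\ind_{\am\torusm}^{\emk}(\cW_{00})$ by using transitivity of induction together with Clifford theory relative to the abelian normal subgroup $\am \lhd \emk$. First I would recall from Lemma~\ref{lem:indTA1} that $\cW_{00}\simeq \ind_{\am\torusm_{00}}^{\am\torusm}(\varphi')$ for any $\varphi=\varphi_{1,1}\in\Omega_{00}$, and that $\torusm_{00}=\Stab_{\torusm}(\varphi)$ is trivial since $\val(\htx)=\val(\htz)=0$. Hence $\cW_{00}\simeq\ind_{\am}^{\am\torusm}(\varphi)$, and by transitivity
\[
\widetilde{\cW}_{00}\simeq \ind_{\am}^{\emk}(\varphi).
\]
So the whole problem reduces to decomposing the representation of $\emk$ induced from a single non-degenerate character $\varphi=\varphi_{1,1}$ of the abelian normal subgroup $\am$.

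Next I would apply Clifford theory. Since $\am\lhd\emk$ with $\emk/\am\simeq\gammam$ acting on $\widehat{\am}$ as computed in Lemma~\ref{lem:stab.of.varphi.in.emk}, the irreducible constituents of $\ind_{\am}^{\emk}(\varphi)$ are parameterised by the $\gammam$-orbit of $\varphi$ together with the irreducible representations of the stabiliser group. Concretely, writing $S=\Stab_{\emk}(\varphi)=\am\dmk$ (the preimage in $\emk$ of $\Delta_m^{\epsilon}$ with $\epsilon=\delta\cdot 1\cdot 1=\delta$, using $\htx=\htz=1$), Clifford theory gives
\[
\ind_{\am}^{\emk}(\varphi)\simeq \ind_{S}^{\emk}\!\left(\ind_{\am}^{S}(\varphi)\right),
\]
and $\ind_{\am}^{S}(\varphi)=\bigoplus_{\chi\in\widehat{S/\am}}\widetilde{\varphi\otimes\chi}$ where $\widetilde{\varphi\otimes\chi}$ runs over the one-dimensional extensions of $\varphi$ to $S$ (these exist and are enumerated by $\widehat{S/\am}$ because $S/\am\simeq\Delta_m^\epsilon$ and $\varphi$ is $S$-invariant, and the quotient is abelian so there is no cohomological obstruction to extending — a linear character always extends to any group in which it is invariant, since $\am$ is central in... here more carefully: $\varphi$ is a linear character of the \emph{abelian normal} $\am$, invariant under $S$, and $S/\am$ abelian, so $\varphi$ extends, and the extensions form a torsor under $\widehat{S/\am}$). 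Therefore each $L_\sigma:=\ind_S^{\emk}(\varphi_\sigma')$ with $\sigma\in\widehat{S/\am}$ is irreducible by the standard Clifford criterion (an induced representation from the full stabiliser of an irreducible — here one-dimensional — representation of a normal subgroup), the $L_\sigma$ are pairwise non-isomorphic since the $\varphi_\sigma'$ are distinct on $S$ and conjugate extensions would force $\varphi$ itself to move, and $\widetilde{\cW}_{00}=\bigoplus_\sigma L_\sigma$ is multiplicity free with $\dim L_\sigma=[\emk:S]$ for all $\sigma$. The index count then splits into the two cases of Lemma~\ref{lem:stab.of.varphi.in.emk}: $|\emk|=|\am|\cdot|\gammam|=q^{2m}\cdot q^{m}(q-1)^2\cdot q^m = q^{3m}(q-1)^2$ suitably; more simply $[\emk:S]=[\gammam:\Delta_m^\epsilon]$, which is $q^{2m-1}(q-1)$ when $\Delta_m^\epsilon\simeq\com^\times$ (case $\delta\in\com^\times$) and $q^{2m-2}(q-1)^2$ when $\Delta_m^\epsilon\simeq\com$ (case $\delta\in\pi\com$), matching the claimed dimensions. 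Finally, identifying the index set: $\widehat{S/\am}=\widehat{\Delta_m^\epsilon}$ is $\widehat{\com^\times}$ in case (a) and $\widehat{\com}$ in case (b), as stated, with $\Sigma$ as given.

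The step I expect to be the main obstacle is the cleanest justification that the one-dimensional extensions of $\varphi$ from $\am$ to $S=A\dmk$ are parameterised \emph{exactly} by $\widehat{S/\am}$ and that distinct extensions induce to non-isomorphic (hence, being equidimensional, inequivalent) irreducibles — i.e. verifying the hypotheses of the Clifford correspondence carefully, including that $S$ really is the full stabiliser (this is Lemma~\ref{lem:stab.of.varphi.in.emk} at $\htx=\htz=1$, noting $\epsilon=\delta$) and that no two characters in $\widehat{S/\am}$ become conjugate under $\emk/S$ — but the latter is automatic since $S$ is the stabiliser, so conjugation by $\emk$ moves $\varphi$ and thus cannot fix the underlying $\am$-character while permuting extensions. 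A secondary bookkeeping point is tracking the uniformiser power $\epsilon=\delta\htx^{-1}\htz$ when specialising to $\varphi_{1,1}$ and checking the two cases $\delta\in\com^\times$ versus $\delta\in\pi\com$ produce the stated groups $\Delta_m^\delta$; these are routine given the lemma. Everything else is transitivity of induction and an order computation.
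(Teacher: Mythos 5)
Your argument is correct and follows essentially the same route as the paper: reduce $\widetilde{\cW}_{00}$ to $\ind_{\am}^{\emk}(\varphi_{1,1})$ via Lemma~\ref{lem:indTA1} and transitivity of induction, invoke Lemma~\ref{lem:stab.of.varphi.in.emk} for the stabiliser $A\dmk$, use the semidirect product structure to parameterise extensions of $\varphi$ by $\widehat{\dmk}$, and conclude by Clifford theory. The only real difference is cosmetic: you establish multiplicity one by applying the full Clifford correspondence through the stabiliser plus transitivity of induction, whereas the paper closes more tersely with a dimension count ($\sum_\sigma \dim L_\sigma = [\emk:\am]$); both work, and your brief digression about cohomological obstructions is unnecessary once you note the semidirect product splitting, but you correct yourself and the argument is sound.
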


\begin{proof}

Using Lemma~\ref{lem:indTA1} we have $\widetilde{\cW}_{00} \simeq \ind^{E^\delta}_{\am}(\varphi)$ for any $\varphi=\varphi_{\htx,\htz}$ with $\htx,\htz \in \com^\times$ which we may specify as $\htx=\htz=1$. By Lemma~\ref{lem:stab.of.varphi.in.emk} we have $\Stab_{\emk}(\varphi)=A\dmk$. Being a semidirect product, the characters of $A\dmk$ which extend $\varphi$ are of the form $\varphi\sigma$ where $\sigma \in \Sigma=\widehat{\dmk}$. It follows that
\begin{equation}\label{eq:def.of.Llmambda}
L_\sigma=\ind_{A\dmk}^{E^\delta}(\varphi \sigma), \quad (\sigma \in \Sigma),
\end{equation}
are irreducible and distinct. By Clifford's theorem, the representations $L_\sigma$ are precisely the irreducible constituents of $\ind^{\emk}_{\am}(\varphi)$, perhaps with multiplicities. However, their direct sum is of dimension $[\emk:A]$, therefore, they occur with multiplicity one.


\end{proof}


\section{An embedding}\label{sec:embedding}
The main result in this section is the following.
\begin{thm}\label{thm:embedding} For all $c \in \cone $ and $m \in \mathbb N$ with $\mu(c) \geq m$, the representation $V_c^m$ is a subrepresentation of $\widetilde{\cW}_{00}$. In particular, $V_c^m$ is multiplicity free.
\end{thm}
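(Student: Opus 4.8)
The plan is to combine Corollary~\ref{cor:Bd-c} with Theorem~\ref{thm:structure.tilde.W00} by realizing $V_c^m$ as a quotient (equivalently, after taking contragredients, a subrepresentation) of a permutation module that maps naturally into $\widetilde{\cW}_{00}$. First I would recall that by Corollary~\ref{cor:Bd-c} the permutation representation $U_c^m$ is isomorphic, as an $\emk$-representation, to $\ind_{\torusm}^{\emk}(\triv) = \C[\emk/\torusm]$, and more generally $U_d^m \simeq \ind_{N^\delta_{d-c+\bm}\torusm}^{\emk}(\triv)$ for $c-\bm \preccurlyeq d \preccurlyeq c$. By Lemma~\ref{lem:indTA1} we have the decomposition $\ind_{\torusm}^{\am\torusm}(\triv) = \bigoplus_{0 \le i,j \le m} \cW_{ij}$, and inducing up to $\emk$ gives $\C[\emk/\torusm] = \bigoplus_{0\le i,j\le m} \widetilde{\cW}_{ij}$. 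So $U_c^m$ decomposes as a direct sum of the $\widetilde{\cW}_{ij}$, with $\widetilde{\cW}_{00}$ one summand.

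The key step is to identify which summand $V_c^m = U_c^m / \sum_{c-\bm \preccurlyeq d \prec c} U_d^m$ picks out. The natural surjection $U_c^m \twoheadrightarrow V_c^m$ kills the span of all $U_d^m$ with $d \prec c$, i.e. with $d - c + \bm \succ 0$ in at least one coordinate (in fact $d-c+\bm \preccurlyeq \bm$ and $d-c+\bm \ne 0$ is not quite the right condition — one must be careful: $d \prec c$ means $d \preccurlyeq c$ and $d \ne c$, so $d - c+\bm$ has at least one coordinate strictly less than $m$). I would argue that each $U_d^m$ with $d \prec c$, being $\ind_{N^\delta_{r}\torusm}^{\emk}(\triv)$ for $r = d-c+\bm$ having some coordinate $< m$, contains no copy of $\widetilde{\cW}_{00}$: indeed, the restriction of such a module to $\am\torusm$ contains only those $\cW_{ij}$ with $(i,j)$ constrained away from $(0,0)$ because the extra unipotent entries in $N^\delta_r$ (of valuation $r_1, r_2$ or $r_3 < m$ in some coordinate) force the corresponding character $\varphi_{\htx,\htz}$ restricted there to be trivial, hence $\htx$ or $\htz$ to have positive valuation, hence $(i,j) \ne (0,0)$. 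Therefore $\sum_{d \prec c} U_d^m$ has zero $\widetilde{\cW}_{00}$-isotypic component, and since $\widetilde{\cW}_{00}$ is multiplicity free in $U_c^m$ it survives faithfully in the quotient $V_c^m$. Dually, this shows $V_c^m$ is a subrepresentation of $\widetilde{\cW}_{00}$ (up to the standard self-duality of these permutation-type modules, or by identifying $V_c^m$ directly as the image of the projection of $U_c^m$ onto its $\widetilde{\cW}_{00}$-component composed with the quotient map). Multiplicity-freeness of $V_c^m$ is then immediate from Theorem~\ref{thm:structure.tilde.W00}.

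The main obstacle I anticipate is the bookkeeping in the previous paragraph: making precise the claim that $\sum_{c-\bm \preccurlyeq d \prec c} U_d^m$ is exactly the sum of the $\widetilde{\cW}_{ij}$ with $(i,j)$ in some explicit index set, and in particular that it omits $\widetilde{\cW}_{00}$ while capturing everything else needed, so that $V_c^m$ is precisely a sub of $\widetilde{\cW}_{00}$ and not merely has $\widetilde{\cW}_{00}$ as a subquotient among others. Concretely one needs: (i) for $d \prec c$, $\widetilde{\cW}_{00} \not\le U_d^m$ — this is the Mackey/orbit computation sketched above, comparing $N^\delta_r\torusm$ against $\am\torusmij$ and using that $r \not\succeq \bm$; and (ii) conversely, every $\widetilde{\cW}_{ij}$ with $(i,j) \ne (0,0)$ occurring in $U_c^m$ already appears in some $U_d^m$ with $d \prec c$, so that the quotient is exactly $\widetilde{\cW}_{00}$ — here one exhibits, for a given $(i,j)$, a suitable $d$ (decreasing $c$ by the right amount in one or two coordinates) whose associated subgroup $N^\delta_{d-c+\bm}$ contains enough unipotent mass to force valuation $\ge i$ in $\htx$ and $\ge j$ in $\htz$. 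Once (i) and (ii) are in hand, the inclusion $V_c^m \hookrightarrow \widetilde{\cW}_{00}$ follows formally and the multiplicity-free conclusion is inherited from Theorem~\ref{thm:structure.tilde.W00}.
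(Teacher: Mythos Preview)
Your overall architecture matches the paper's: decompose $U_c^m \simeq \bigoplus_{0\le i,j\le m}\widetilde{\cW}_{ij}$ via Lemma~\ref{lem:indTA1} and then control which summands are killed by $\sum_{c-\bm\preccurlyeq d\prec c}U_d^m$. Your step (ii) is exactly Proposition~\ref{prop:Udm-subrep-TildeWij}: for $(i,j)\ne(0,0)$ one exhibits a specific $d\prec c$ (namely $d=c-\et_1$ if $i>0$, or $d=c-\et_3$ if $j>0$) with $\widetilde{\cW}_{ij}\subset U_d^m$. That alone already finishes the proof, since it gives
\[
\sum_{c-\bm\preccurlyeq d\prec c}U_d^m\ \supset\ \bigoplus_{(i,j)\ne(0,0)}\widetilde{\cW}_{ij},
\]
so $V_c^m$ is a quotient of $\widetilde{\cW}_{00}$ and hence, by semisimplicity, a subrepresentation.

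Your step (i), however, is both unnecessary and \emph{false}, and this is where the proposal goes wrong. You claim that for $d\prec c$ the module $U_d^m$ has no constituent in common with $\widetilde{\cW}_{00}$, arguing that the extra unipotent mass in $N^\delta_r$ (for $r=d-c+\bm$) forces $\htx$ or $\htz$ to have positive valuation. This reasoning breaks down precisely when $r_2<m$ while $r_1=r_3=m$, i.e.\ for $d=c-\et_2$: then $N^\delta_r$ lies entirely in the $\gammam$-factor of $\emk$, imposes no condition on the $\am$-characters, and your valuation argument says nothing. In fact Theorem~\ref{thm:LsigmaInVcm} later shows that for every $\sigma\in\iota^*(\widehat{\Delta}^\delta_{m-1})$ the irreducible constituent $L_\sigma\subset\widetilde{\cW}_{00}$ \emph{does} occur in $U_{c-\et_2}^m$ (and also in $U_{c-\et_1}^m$ when $\delta\in\com^\times$). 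So $V_c^m$ is in general a \emph{proper} subrepresentation of $\widetilde{\cW}_{00}$, and your attempt to prove $V_c^m\simeq\widetilde{\cW}_{00}$ would fail. Drop (i), keep (ii), and you have the paper's proof.
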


Later on we shall prove a finer result (Theorem~\ref{thm:LsigmaInVcm}) in which the precise decomposition of $V_c^m$ is given. Nevertheless, the arguments needed to prove Theorem~\ref{thm:embedding} are softer and seem to be in a better position towards generalization to $\GL_n$ with $n > 3$.

\begin{prop}\label{prop:Udm-subrep-TildeWij} Let $c \in \cone $ and let $ m \in \mathbb N$ such that $\mu(c) \geq m$. Then, for $0 \le i,j \le m$, we have  $\widetilde{\cW}_{ij}  \subset U^m_{d}$ if one (or both) of the following hold.
\begin{enumerate}
\item [(a)] $i>0$ and  $d=(c_1-1,c_2,c_3)$.

\item [(b)] $j>0$ and $d=(c_1,c_2,c_3-1)$.
\end{enumerate}
\end{prop}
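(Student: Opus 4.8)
The plan is to identify $U_d^m$ as an induced representation over $\emk$ via Corollary~\ref{cor:Bd-c}, and then to recognize $\widetilde{\cW}_{ij}$ inside it by exhibiting a suitable $\torusm$-fixed vector or, equivalently, a nonzero intertwining operator using the Frobenius/Mackey criterion recalled at the end of Section~\ref{sec:intro}. First I would invoke Corollary~\ref{cor:Bd-c}: for $c - m\rho \preccurlyeq d \preccurlyeq c$ we have $U_d^m \simeq \ind_{N^\delta_{d-c+m\rho}\,\torusm}^{\emk}(\triv)$, where $\delta = \pi^{\kinv(c-m\rho)}$. In case (a), $d = (c_1-1, c_2, c_3)$ gives $d - c + m\rho = (m-1, m, m)$, so the relevant subgroup is $N^\delta_{(m-1,m,m)}\torusm$, which (since $\pi^m \equiv 0$ in $\com$) is just the group generated by $\torusm$ together with the unipotent one-parameter subgroup $\{\ut_{21}(u) : u \in \pi^{m-1}\com\}$, i.e. the image of $\am$-direction generated by the $(2,1)$-entry with valuation $\ge m-1$, together with $\torusm$. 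Symmetrically, in case (b), $d = (c_1, c_2, c_3-1)$ gives $d-c+m\rho = (m,m,m-1)$, and the subgroup is generated by $\torusm$ and $\{\ut_{32}(v) : v \in \pi^{m-1}\com\}$.

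Next I would compare this with the description of $\widetilde{\cW}_{ij} = \ind_{\am\torusm}^{\emk}(\cW_{ij})$, where by Lemma~\ref{lem:indTA1} $\cW_{ij} \simeq \ind_{\am\torusmij}^{\am\torusm}(\varphi')$ for any $\varphi = \varphi_{\htx,\htz} \in \Omega_{ij}$. Transitivity of induction gives $\widetilde{\cW}_{ij} \simeq \ind_{\am\torusmij}^{\emk}(\varphi')$. So, by the intertwining criterion, to show $\widetilde{\cW}_{ij} \subset U_d^m$ it suffices to exhibit $g \in \emk$ such that $\triv = {}^g\varphi'$ on $(N^\delta_{d-c+m\rho}\torusm) \cap {}^g(\am\torusmij)$, and to check the multiplicity-one bookkeeping so that one copy of $\widetilde{\cW}_{ij}$ (which is irreducible) really embeds. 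The natural choice is $g = \Id$: then I must check that $\varphi'$ restricted to $(N^\delta_{d-c+m\rho}\torusm)\cap(\am\torusmij)$ is trivial. On the torus side this intersection is contained in $\torusmij = \Stab_{\torusm}(\varphi)$, on which $\varphi'$ is trivial by construction. On the $\am$-side, in case (a) the intersection with $N^\delta_{(m-1,m,m)}$ forces the $(2,1)$-component (the "$x$"-coordinate of $\am = \left[\begin{smallmatrix}\com\\\com\end{smallmatrix}\right]$) to lie in $\pi^{m-1}\com$ and the other ("$z$") coordinate to vanish; choosing $\varphi = \varphi_{\htx,\htz}$ with $\val(\htx) = i \ge 1$ makes $\psi(\htx x) = 1$ for $x \in \pi^{m-1}\com$ precisely when... — here I must select $i$ correctly: the claim is for all $0 \le i,j \le m$ with $i > 0$, so I pick $\htx$ with $\val(\htx) = i$; then $\htx x \in \pi^{i + m - 1}\com = 0$ in $\com$ since $i \ge 1$, so $\varphi$ is trivial there. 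The $z$-coordinate is $0$ in the intersection, so $\varphi$ is trivially trivial on it. Case (b) is the mirror image with the roles of the coordinates and of $i,j$ swapped.

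The main obstacle I anticipate is not the existence of the intertwiner — that is the short computation above — but rather ensuring the embedding is genuine and accounting for \emph{which} copy: since $U_d^m$ is a permutation representation it may contain many copies of various $\widetilde{\cW}_{ij}$, and I must verify that $\Hom_{\emk}(\widetilde{\cW}_{ij}, U_d^m) \ne 0$, i.e. that the intertwining function is nonzero, and that $\widetilde{\cW}_{ij}$ (being induced from the \emph{irreducible} $\am\torusm$-module $\cW_{ij}$, hence itself a sum of irreducibles with a well-understood structure, or directly irreducible when restricted suitably) actually injects. Concretely I would argue: the intertwining space $\Hom_{\emk}(\ind_{\am\torusmij}^{\emk}(\varphi'),\, \ind_{N^\delta_r\torusm}^{\emk}(\triv))$ is, by Mackey, the space of functions on $N^\delta_r\torusm \backslash \emk / \am\torusmij$ supported on double cosets admitting the matching-of-characters condition, and the double coset of $\Id$ contributes a one-dimensional (hence nonzero) piece because $\varphi'|_{(N^\delta_r\torusm)\cap(\am\torusmij)} = \triv$; since $\widetilde{\cW}_{ij}$ has an irreducible summand (indeed by Theorem~\ref{thm:structure.tilde.W00}-type reasoning its constituents are known) meeting this, and by Frobenius reciprocity $\dim\Hom$ counts an actual submodule, we conclude $\widetilde{\cW}_{ij} \hookrightarrow U_d^m$. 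I would also double-check the edge behaviour, namely that $d$ as defined still lies in $\cone$ and in $[c-m\rho, c]$ (automatic from $\mu(c) \ge m$ and the shape of the perturbation), so that $U_d^m$ is defined and Corollary~\ref{cor:Bd-c} applies.
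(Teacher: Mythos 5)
Your plan correctly identifies the intertwining condition at the identity coset and the valuation computation ($\val(\htx)=i\ge 1$ kills $\psi(\htx x)$ for $x\in\pi^{m-1}\com$, and symmetrically for $\htz$), which is precisely the computation in the paper. But there is a genuine gap in how you conclude the \emph{full} inclusion $\widetilde{\cW}_{ij}\subset U_d^m$ from the nonvanishing of the $\emk$-level Hom space.

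You write ``one copy of $\widetilde{\cW}_{ij}$ (which is irreducible) really embeds,'' but $\widetilde{\cW}_{ij}=\ind_{\am\torusm}^{\emk}(\cW_{ij})$ is \emph{not} irreducible in general — Theorem~\ref{thm:structure.tilde.W00} exhibits $\widetilde{\cW}_{00}$ as a direct sum of many constituents $L_\sigma$. Consequently, showing $\Hom_{\emk}\bigl(\widetilde{\cW}_{ij},U_d^m\bigr)\ne 0$ by exhibiting a nonzero intertwiner supported on the identity double coset only shows that \emph{some} irreducible constituent of $\widetilde{\cW}_{ij}$ lies in $U_d^m$; it does not show that all of them do, which is what the statement asserts. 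Your fallback sentence (``since $\widetilde{\cW}_{ij}$ has an irreducible summand\dots we conclude $\widetilde{\cW}_{ij}\hookrightarrow U_d^m$'') is exactly this non-sequitur. The paper avoids it by making the comparison one level down: using transitivity of induction, $U_d^m\simeq\ind_{\am\torusm}^{\emk}\bigl(\ind_{N_r^\delta\torusm}^{\am\torusm}(\triv)\bigr)$, so it suffices to show $\cW_{ij}\subset\ind_{N_r^\delta\torusm}^{\am\torusm}(\triv)$ as $\am\torusm$-modules. Since $\cW_{ij}$ \emph{is} irreducible over $\am\torusm$ (Lemma~\ref{lem:indTA1}), a one-dimensional Hom space there does give a genuine inclusion, and applying the (exact) induction functor $\ind_{\am\torusm}^{\emk}$ then yields $\widetilde{\cW}_{ij}\subset U_d^m$ in full. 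The identity-coset computation you carried out is the right one; the fix is to run it at the $\am\torusm$ level, where $(\am\torusmij)(N_r^\delta\torusm)=\am\torusm$ moreover forces the Hom space to be exactly one-dimensional, rather than at the $\emk$ level. (A minor slip: in case (b) the relevant entry is the $(3,1)$-entry, i.e.\ the $z$-coordinate of $\am$, not the $(3,2)$-entry as you wrote — the $(3,2)$-part of $N^\delta_r$ is trivial since $r_2=m$.)
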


\begin{proof} Let $i,j$ and $d$ satisfy (a) or (b). Note that the assumptions on $c$ and $m$ imply that
$d \in \cone$ and that $c-\bm \prec d \prec c$. Let $r=d-(c-\bm)$. By Lemma~\ref{lem:indTA1} and Corollary \ref{cor:Bd-c}
we have
\[
\begin{split}
\widetilde{\cW}_{ij} &\simeq \ind^{\emk}_{\am\torusm}(\cW_{ij}) \simeq \ind^{\emk}_{\am\torusm} \left( \ind^{\am\torusm}_{\am\torusmij}(\varphi'_{\htx,\htz})\right),  \\
 U_d^m & \simeq  \ind_{\am\torusm}^{\emk} \left( \ind_{N_{r}^\delta\torusm}^{\am\torusm} (\triv) \right),
\end{split}
\]
for some $(\htx,\htz) \in \pi^i\com^\times \times \pi^j\com^\times$. Therefore, it is enough to show that $\cW_{ij} \subset \ind_{N_r^\delta\torusm}^{\am\torusm} (\triv)$. Since $\cW_{ij}$ is irreducible the latter is equivalent to
\[
\begin{split}
1&=\dim_\C \Hom_A\left(\ind^{\am\torusm}_{\am\torusmij}(\varphi'_{\htx,\htz}), \ind_{N_{r}^\delta\torusm}^{\am\torusm} (\triv) \right)\\
 &=\dim_\C \left\{f:\am\torusm \to \C \mid f(h_1 gh_2)=\triv(h_1) f(g)\varphi'_{\htxz}(h_2), \text{for all $h_1 \in N_{r}^\delta\torusm $, $g \in \am\torusm$, $h_2 \in \am\torusmij  $}\right\}.
\end{split}
\]
Since $ (\am\torusmij)( {N_{r}^\delta}\torusm)=\am\torusm$, the only candidate for the support of a non-zero intertwining function is the identity element. To see that such non-zero function exists we need to check that $\triv=\varphi'_{\htxz}$ on $ \am\torusmij \cap N_r^\delta\torusm$. Indeed, for a general element in this intersection we have
\[
\varphi_{\htxz}\left(\left[\begin{matrix} x \\ 0 \end{matrix} \right] \rtimes \left[\begin{matrix} \theta_1 & \\  & \theta_2 \end{matrix} \right]\right) = \psi(\htx x)=1,
\]
for case (a) of the proposition, since $x\in \pi^{m-1}\com$ and $\htx\in \pi\com$. Similarly
\[
\varphi_{\htxz}\left(\left[\begin{matrix} 0 \\ z \end{matrix} \right] \rtimes \left[\begin{matrix} \theta_1 & \\  & \theta_2 \end{matrix} \right]\right) = \psi(\htz z)=1,
\]
for case (b) of the proposition, since $z\in \pi^{m-1}\com$ and $\htz\in \pi\com$.
\end{proof}

\begin{proof}[Proof of Theorem~\ref{thm:embedding}] By Lemma \ref{lem:indTA1} and \eqref{eq:def.of.tildeWij} we have
\[
U_c^m = \bigoplus_{0 \le i,j \le m}\widetilde{\cW}_{ij},
\]
and by Proposition~\ref{prop:Udm-subrep-TildeWij}
\[
\sum_{c-\bm \preccurlyeq  d  \prec  c} U_d^m \supset \sum_{(i,j) \ne (0,0)} \widetilde{\cW}_{i,j}.
\]
These together imply that $V_c^m = U_c^m /\sum_{c-\bm \preccurlyeq  d  \prec  c} U_d^m$ embeds in $\widetilde{\cW}_{0,0}$.
\end{proof}


\section{A decomposition}\label{sec:decomposition}

Combining Theorems~\ref{thm:structure.tilde.W00} and~\ref{thm:embedding} we obtain a list of candidate irreducible subrepresentations of $V_c^m$ for $c\in \cone$ and $m \in \mathbb N$ with $\mu(c) \geq m$. In this section we pin down those irreducible representations that indeed occur.
For $m \in \N$ let $\Delta^\delta_m$ be as in Lemma~\ref{lem:stab.of.varphi.in.emk}, namely, isomorphic to $\com^\times$ or $\com$ depending on whether $\delta$ is invertible or not, respectively. To simplify notation we choose such isomorphisms and identify $\dmk$ with either $\com^\times$ or $\com$. Set $\Delta^\delta_0$ to be the trivial group. Let $\iota:\Delta^\delta_m \to \Delta^\delta_{m-1}$ denote reduction modulo $\pi^{m-1}$. The map $\iota$ induces an embedding of characters $\iota^*:\widehat{\Delta}^\delta_{m-1} \hookrightarrow \widehat{\Delta}^\delta_{m}$. The image consists of characters of $\Delta^\delta_m$ which factor through $\Delta^\delta_{m-1}$. Recall that for $\varphi=\varphi_{1,1}$ we have
$\Stab_{\emk}(\varphi)\simeq A \rtimes \Delta^\delta_m$, and that for $\sigma \in \widehat{\Delta}^\delta_{m}$
\[
L_\sigma=\ind_{\am\dmk}^{\emk}(\varphi\sigma).
\]
We also recall that
\[
U_d^m \simeq \ind_{Q_{d-c+m\rho}}^{\emk}(\triv),
\]
and that
\[
Q^\delta_r=
\left[\begin{matrix} \mi^{r_1} \\ \mi^{r_3} \end{matrix}\right] \rtimes \left[\begin{matrix} \com^\times &  \\ \mi^{r_2} & \com^{\times} \end{matrix}\right].
\]

\begin{thm}\label{thm:LsigmaInVcm} For $c \in \cone$, $m \in \mathbb N$ with $\mu(c) \geq m$ and $\sigma \in \widehat{\Delta}^\delta_m$ we have
\[
\mathrm{Hom}_{\emk}(L_\sigma, V_c^m) =\left\{
                                         \begin{array}{ll}
                                           0, & \hbox{if $\sigma \in \iota^*(\widehat{\Delta}_{m-1}^\delta)$;} \\
                                           1, & \hbox{otherwise.}
                                         \end{array}
                                       \right.
\]
\end{thm}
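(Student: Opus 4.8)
The plan is to compute the intertwining space $\mathrm{Hom}_{\emk}(L_\sigma, V_c^m)$ directly using the Frobenius reciprocity / Mackey machinery and the explicit models recalled just before the statement. Since $V_c^m = U_c^m / \sum_{c-\bm \preccurlyeq d \prec c} U_d^m$ and, by Theorem~\ref{thm:embedding}, $V_c^m$ is a multiplicity-free subrepresentation of $\widetilde{\cW}_{00}$, each $L_\sigma$ occurs in $V_c^m$ with multiplicity $0$ or $1$; the only question is which. By the decomposition in Theorem~\ref{thm:structure.tilde.W00}, $\widetilde{\cW}_{00} = \bigoplus_{\sigma \in \Sigma} L_\sigma$, so it is equivalent to decide, for each $\sigma$, whether $L_\sigma$ lies in the subrepresentation $\sum_{d} U_d^m \cap \widetilde{\cW}_{00}$ (in which case it is killed in $V_c^m$) or not. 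Concretely, I would show
\[
\mathrm{Hom}_{\emk}(L_\sigma, V_c^m) = \mathrm{Hom}_{\emk}(L_\sigma, \widetilde{\cW}_{00}) \text{ reduced modulo the image of } \bigoplus_{(i,j)\ne(0,0)}\widetilde{\cW}_{ij},
\]
and then compute $\dim \mathrm{Hom}_{\emk}(L_\sigma, U_d^m)$ for the relevant boundary indices $d \in \{(c_1-1,c_2,c_3),(c_1,c_2,c_3-1),\dots\}$ using the formula $U_d^m \simeq \ind_{Q^\delta_{d-c+\bm}}^{\emk}(\triv)$ together with $L_\sigma = \ind_{A\dmk}^{\emk}(\varphi\sigma)$.

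The heart of the matter is a double-coset computation. By the intertwining criterion recalled at the end of the introduction,
\[
\mathrm{Hom}_{\emk}\bigl(\ind_{A\dmk}^{\emk}(\varphi\sigma),\ind_{Q^\delta_r}^{\emk}(\triv)\bigr)
\]
is spanned by functions supported on those double cosets $A\dmk \, g \, Q^\delta_r$ for which $\varphi\sigma = {}^g\triv = \triv$ on $A\dmk \cap {}^g Q^\delta_r$. One first checks that $A\dmk$ and $Q^\delta_r$ together generate $\emk$ for the $r$ in question, so that (as in the proof of Proposition~\ref{prop:Udm-subrep-TildeWij}) it suffices to analyze the identity double coset, possibly after conjugating; the key constraint becomes whether $\varphi\sigma$ restricted to $A\dmk \cap Q^\delta_r$ (an explicit subgroup: the $A$-part being $\mi^{r_1}\times\mi^{r_3}$ and the $\dmk$-part being the elements of $\dmk$ lying in $Q^\delta_r$, i.e.\ with lower-left entry in $\mi^{r_2}$) is trivial. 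Since $\varphi=\varphi_{1,1}$ restricts trivially to $\mi^{r_1}\times\mi^{r_3}$ precisely when $r_1,r_3\ge 1$ — which is \emph{not} the case for the boundary $d$'s, where exactly one coordinate of $r=d-c+\bm$ drops to $0$ — the triviality hinges entirely on $\sigma$ restricted to $\dmk \cap Q^\delta_r$. A short computation identifies $\dmk \cap Q^\delta_r$ (using the explicit description of $\dmk$ from Lemma~\ref{lem:stab.of.varphi.in.emk}) as precisely the kernel of reduction $\iota:\dmk \to \Delta^\delta_{m-1}$, so that $\sigma$ is trivial on it if and only if $\sigma \in \iota^*(\widehat{\Delta}^\delta_{m-1})$.

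Having established that $\dim\mathrm{Hom}_{\emk}(L_\sigma, U_d^m) = 1$ exactly when $\sigma \in \iota^*(\widehat{\Delta}^\delta_{m-1})$ (and $0$ otherwise) for the boundary cosets $d$ appearing in Proposition~\ref{prop:Udm-subrep-TildeWij}, one concludes as follows: if $\sigma \in \iota^*(\widehat{\Delta}^\delta_{m-1})$ then $L_\sigma$ embeds into some $\widetilde{\cW}_{ij}$ with $(i,j)\ne(0,0)$ (compatibly with its embedding into $\widetilde{\cW}_{00}$, since $L_\sigma$ is an irreducible constituent of $U_c^m$ occurring with total multiplicity one across the $\widetilde{\cW}_{ij}$), hence $L_\sigma$ is contained in $\sum_{c-\bm\preccurlyeq d\prec c}U_d^m$ and dies in the quotient $V_c^m$; conversely, if $\sigma \notin \iota^*(\widehat{\Delta}^\delta_{m-1})$, then $L_\sigma$ does not appear in any $U_d^m$ for $d\prec c$, so its image in $V_c^m$ is nonzero, and by multiplicity-freeness the Hom-space is exactly one-dimensional. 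I expect the main obstacle to be the bookkeeping in this last step: one must check that the copy of $L_\sigma$ inside $\sum_{d\prec c} U_d^m$ is the \emph{same} copy as the one inside $\widetilde{\cW}_{00}\subset U_c^m$ — i.e.\ that the relevant maps are compatible with the inclusions $U_d^m \hookrightarrow U_c^m$ — which amounts to tracking the identifications through Corollary~\ref{cor:Bd-c} and Proposition~\ref{prop:Udm-subrep-TildeWij} carefully, and to handle the case $\sigma\in\iota^*(\widehat{\Delta}^\delta_{m-1})$ one needs $L_\sigma$ to occur in a $\widetilde{\cW}_{ij}$ that Proposition~\ref{prop:Udm-subrep-TildeWij} actually places inside some $U_d^m$ with $d\prec c$, which is where the hypothesis $\mu(c)\ge m$ (ensuring the boundary $d$'s lie in $\cone$) is used.
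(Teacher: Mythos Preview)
Your overall strategy matches the paper's: reduce the question of whether $L_\sigma$ survives in $V_c^m$ to whether $L_\sigma$ embeds in $U_d^m$ for some maximal $d \prec c$, and decide the latter via the intertwining criterion applied to $L_\sigma=\ind_{A\dmk}^{\emk}(\varphi\sigma)$ and $U_d^m=\ind_{Q^\delta_r}^{\emk}(\triv)$. However, the execution contains a genuine gap.

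The central error is the claim that ``$A\dmk$ and $Q^\delta_r$ together generate $\emk$, so it suffices to analyze the identity double coset.'' This is false. The paper computes the double coset space $Q^\delta_r \backslash \emk / A\dmk$ explicitly and finds that it is parameterized by elements $g(y)=\left[\begin{smallmatrix}0\\0\end{smallmatrix}\right]\rtimes\left[\begin{smallmatrix}1&\\y&1\end{smallmatrix}\right]$ with $y\in\com$, so there are $|\com|$ double cosets, not one. Worse, your single-coset analysis would give the wrong answer: for $r=\bm-\et_1$ and $\delta\in\com^\times$, at the identity coset ($y=0$) the intersection ${}^{g(0)}Q^\delta_r\cap A\dmk$ has $A$-part $\mi^{m-1}\times\{0\}$ on which $\varphi_{1,1}$ is \emph{nontrivial}, so \emph{no} $\sigma$ supports an intertwiner there. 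The intertwiner for $\sigma\in\iota^*(\widehat{\Delta}^\delta_{m-1})$ lives on a coset $g(y)$ with $\val(1+y\delta)>0$, where the intersection changes shape and the condition becomes exactly $\sigma|_{1+\mi^{m-1}}=\triv$. Thus both the existence direction (for $\sigma\in\iota^*$) and the non-existence direction (for $\sigma\notin\iota^*$, which requires checking \emph{all} cosets) need the full double coset analysis, and this analysis must further split into the cases $\delta\in\com^\times$ and $\delta\in\pi\com$, since the shape of $\dmk$ and of the intersections differ.

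Two smaller points: you omit the case $d=c-\et_3$ (i.e.\ $r=\bm-\et_3$), which the paper dispatches by observing that $N^\delta_{\bm-\et_3}$ is characteristic in $\emk$ and $\varphi$ is nontrivial on it, so $\Hom(L_\sigma,U_{c-\et_3}^m)=0$ for every $\sigma$. And your phrasing ``$L_\sigma$ embeds into some $\widetilde{\cW}_{ij}$ with $(i,j)\ne(0,0)$'' is not the right picture: the $L_\sigma$ are constituents of $\widetilde{\cW}_{00}$ only, and the question is whether the (unique) copy of $L_\sigma$ inside $\widetilde{\cW}_{00}\subset U_c^m$ lies in the subspace $\sum_{d\prec c}U_d^m$.
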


\begin{proof}
We show that $L_\sigma \leqslant \sum_{c-\bm \preccurlyeq d \prec c} U_d^m$ if and only if $\sigma \in \iota^*(\widehat{\Delta}_{m-1}^\delta)$. This is equivalent to the validity of
\begin{equation}\label{eq:condition.IS}\tag*{\qquad \qquad $\mathrm{J}(S)$:}
\begin{split}
\Hom_{\emk}\left(L_\sigma, U_d^m \right)& \ne \left\{0\right\} ~ \textup{for some $d\in S$ if $\sigma \in \iota^*(\widehat{\Delta}_{m-1}^\delta)$} \\
\Hom_{\emk}\left(L_\sigma, U_d^m \right)&= \left\{0\right\} ~ \textup{for all $d\in S$ if $\sigma \not\in \iota^*(\widehat{\Delta}_{m-1}^\delta)$}
\end{split}
\end{equation}
for $S=\{d \mid c-\bm \preccurlyeq d \prec c\}$. Since each of the representations $U_d^m$ ($d \in S$) is contained in one of the three maximal representations indexed by $d \in S'=\{c-\et_1,c-\et_2,c-\et_3\}$, with $\{\et_i\}$ denoting the standard basis, it is enough to prove that $\mathrm{J}(S')$ holds. Using Corollary~\ref{cor:Bd-c} and the definition of $L_\sigma$ we have that for $d \in S'$ and $r=d-(c-\bm)$
\[
\begin{split}
\Hom_{\emk}\left(L_\sigma, U_d^m \right)&=\Hom_{\emk}\left(\ind_{\am\dmk}^{\emk}(\varphi\sigma), \ind_{Q^\delta_{r}}^{\emk} (\triv) \right) \\
 &=\left\{f:\emk \to \C \mid f(h_1 gh_2)=f(g)\varphi\sigma(h_2), ~\forall h_1 \in Q^\delta_{r} , \forall g \in \emk,
 \forall h_2 \in \am\dmk  \right\},
\end{split}
\]
and this space is null if and only if $\varphi\sigma \ne {}^g\triv=\triv$ on $\am\dmk \cap {}^g Q^\delta_{r}$ for all $g \in \emk$. We first observe that for $r=\bm -\et_3=(m,m,m-1)$ this is indeed the case since the subgroup
\[
N^\delta_{\bm-e_3}=\left[ \begin{matrix} 0 \\ \pi^{m-1}\com \end{matrix}\right] \rtimes \left[ \begin{matrix} 1 & 0 \\ 0 & 1 \end{matrix}\right] \subset \am\dmk \cap N^\delta_{\bm-e_3}\torusm
\]
is a characteristic subgroup in $\emk$, and $\varphi$ is nontrivial on it. Since this is true for any $\sigma$ the proof is reduced to the validity of $\mathrm{J}(S'')$ with $S''=\{c-\et_1,c-\et_2\}$.
%
A straightforward computation shows that for the corresponding values of $r$,
and regardless of the value of $\delta$, the elements
\[
g(y)=\left[\begin{matrix} 0 \\0 \end{matrix}\right] \rtimes \left[\begin{matrix} 1 &  \\ y & 1 \end{matrix}\right], y \in \com,
\]
form an exhaustive set of representatives for the double coset space $Q^\delta_{r} \backslash E^\delta / \am \dmk$. Conjugating $Q^\delta_{r}$ with $g(y)$ gives
\begin{equation}
  \begin{array}{ll}
    \left\{\left[\begin{matrix} x \\ \delta x y \end{matrix}\right] \rtimes \left[\begin{matrix} \theta_1 &  \\ (\theta_1-\theta_2)y & \theta_2 \end{matrix}\right] \mid \theta_1,\theta_2 \in \com^\times, x \in \mi^{m-1} \right\}, & \hbox{if $r=\bm-\et_1$;} \\ \\
    \left\{\left[\begin{matrix} 0 \\ 0 \end{matrix}\right] \rtimes \left[\begin{matrix} \theta_1 &  \\ u+(\theta_1-\theta_2)y & \theta_2 \end{matrix}\right] \mid \theta_1,\theta_2 \in \com^\times, u \in \mi^{m-1} \right\} , & \hbox{if $r=\bm-\et_2$.}
  \end{array}
\end{equation}
We now proceed according to the value of $\delta$.
\begin{enumerate}
\item [(a)] ${\delta \in \com^\times}$. \qquad

\noindent The intersection  ${}^{g(y)}Q_{r}^\delta \cap \am\dmk$ is given by
\begin{equation*}
  \begin{array}{ll}
    \left\{\left[\begin{matrix} x \\ \delta x y \end{matrix}\right] \rtimes \left[\begin{matrix} \theta &  \\ (\theta-1)y & 1 \end{matrix}\right] \mid \theta \in 1+\mi^{m-\val(y\delta+1)}, x \in \mi^{m-1} \right\},
& \hbox{if $r=\bm-\et_1$;} \\ \\
    \left\{\left[\begin{matrix} 0 \\ 0 \end{matrix}\right] \rtimes \left[\begin{matrix} \theta &  \\ u+(\theta-1)y & 1 \end{matrix}\right] \mid \theta \in 1+\mi^{m-\val(y\delta+1)-1}, u=(1-\theta)(1+\delta y)\delta^{-1} \right\} , & \hbox{if $r=\bm-\et_2$.}
  \end{array}
\end{equation*}
If $r=\bm-\et_1$, then for a given $\sigma$ the element $g(y)$ supports a nonzero intertwiner if and only if
\[
\varphi\left[\begin{matrix} x \\ \delta xy \end{matrix} \right] \sigma(\theta) = \psi\left(x(1+y\delta)\right)\sigma(\theta)=1, \quad \forall x \in \mi^{m-1}, \forall \theta \in 1+\mi^{m-\val(y\delta+1)},
\]
that is, $\val(1+y\delta)>0$ and $\sigma|_{1+\mi^{m-\val(y\delta+1)}} = \triv$. Therefore, $L_\sigma \leqslant U_{c-\et_1}^m$ if and only if $\sigma|_{1+\mi^{m-1}} = \triv$.

\noindent If $r=\bm-\et_2$, then for a given $\sigma$ the element $g(y)$ supports a nonzero intertwiner if and only if
\[
\varphi\left[\begin{matrix} 0 \\ 0 \end{matrix} \right] \sigma(\theta) = 1, \quad \forall \theta \in 1+\mi^{m-\val(y\delta+1)-1},
\]
that is, $\sigma|_{1+\mi^{m-\val(y\delta+1)-1}} = \triv$. Therefore, $L_\sigma \leqslant U_{c-\et_2}^m$ if and only if $\sigma|_{1+\mi^{m-1}} = \triv$.

\medskip

\item [(b)] ${\delta \in \pi\com}$. \qquad

\noindent The intersection  ${}^{g(y)}Q_{r}^\delta \cap \am\dmk$ is given by
\begin{equation*}
  \begin{array}{ll}
    \left\{\left[\begin{matrix} x \\ 0 \end{matrix}\right] \rtimes \left[\begin{matrix} \theta &  \\ (\theta-1)y & 1 \end{matrix}\right] \mid \theta \in 1+\mi^{m-\val(y\delta+1)}, x \in \mi^{m-1} \right\},
& \hbox{if $r=\bm-\et_1$;} \\ \\
    \left\{\left[\begin{matrix} 0 \\ 0 \end{matrix}\right] \rtimes \left[\begin{matrix} 1 &  \\ u & 1 \end{matrix}\right] \mid  u  \in \mi^{m-1} \right\} , & \hbox{if $r=\bm-\et_2$.}
  \end{array}
\end{equation*}
If $r=\bm-\et_1$, then
\[
\varphi\left[\begin{matrix} x \\ 0 \end{matrix} \right] \ne \triv,
\]
and we conclude that $L_\sigma \nleqslant U_{c-\et_1}^m$ for all $\sigma$'s.

\noindent If $r=\bm-\et_2$, then if $\sigma|_{\mi^{m-1}}=\triv$ in fact all the elements $g(y)$ support a non-zero intertwining function and none of them otherwise. Therefore, $L_\sigma \leqslant U_{c-\et_2}^m$ if and only if $\sigma_{\mi^{m-1}} = \triv$.
\end{enumerate}

The theorem is proved.

\end{proof}

\begin{defn} Let $c \in \cint$ and let $\delta=\pi^{\kinv(c)-\minv(c)}$. For $\sigma \in \widehat{\Delta}^\delta_{\minv(c)} \smallsetminus \iota^*\left(\widehat{\Delta}^\delta_{\minv(c)-1}\right)$ let
\[
\widetilde{L}_{c,\sigma} = \ind_{P_{c-\minv(c)\rho}}^G \left(\mathrm{Ind}_{\emk}^{P_{c-\minv(c)\rho}}\left(L_\sigma\right)\right),
\]
where $\mathrm{Ind}_{\emk}^{P_{c-\minv(c)\rho}}\left(L_\sigma\right)$ stands for the pullback of $L_\sigma$ along the composition of the quotient maps \eqref{eq:bmktoemk} and \eqref{eq:Pcmktobmk}.
\end{defn}

\begin{cor}\label{cor:decomposition.of.Vc} For $c \in \cint$ the decomposition of $V_c$ to irreducible representations is multiplicity free and given by

\[
V_c = \bigoplus_{\sigma \in \widehat{\Delta}^\delta_{\minv(c)} \smallsetminus \iota^*\left(\widehat{\Delta}^\delta_{\minv(c)-1}\right)} \widetilde{L}_{c,\sigma}.
\]

\end{cor}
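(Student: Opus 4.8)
The plan is to combine the reduction statement \eqref{cn3} with Theorem~\ref{thm:LsigmaInVcm} applied to the distinguished value $m = \minv(c)$. First I would recall that by \eqref{cn3} we have $\dim \mathrm{End}_G(V_c) = \dim \mathrm{End}_{P_{c-\minv(c)\rho}}(V_c^{\minv(c)})$, and that the irreducible constituents of $V_c$ are precisely the representations induced from $P_{c-\minv(c)\rho}$ up to $G$ of the irreducible constituents of $V_c^{\minv(c)}$, counted with the same multiplicities. So it suffices to decompose $V_c^{\minv(c)}$ as an $E^\delta_m$-representation (with $m=\minv(c)$ and $\delta = \pi^{\kinv(c)-\minv(c)}$, which is the correct twist since $\kinv(c-\bm) = \kinv(c)-m$), and then transport the answer along the pullback maps \eqref{eq:Pcmktobmk} and \eqref{eq:bmktoemk} and the induction to $G$.

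Next I would invoke Theorem~\ref{thm:embedding}, which tells us $V_c^{\minv(c)} \hookrightarrow \widetilde{\cW}_{00}$, and Theorem~\ref{thm:structure.tilde.W00}, which gives the multiplicity-free decomposition $\widetilde{\cW}_{00} = \bigoplus_{\sigma \in \Sigma} L_\sigma$ with $\Sigma = \widehat{\Delta}^\delta_m$. Since $V_c^m$ embeds in a multiplicity-free module, each $L_\sigma$ appears in $V_c^m$ with multiplicity $0$ or $1$. Theorem~\ref{thm:LsigmaInVcm} identifies exactly which: $\mathrm{Hom}_{E^\delta_m}(L_\sigma, V_c^m)$ is one-dimensional precisely when $\sigma \notin \iota^*(\widehat{\Delta}^\delta_{m-1})$ and zero otherwise. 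Therefore
\[
V_c^{\minv(c)} \simeq \bigoplus_{\sigma \in \widehat{\Delta}^\delta_{\minv(c)} \smallsetminus \iota^*(\widehat{\Delta}^\delta_{\minv(c)-1})} L_\sigma
\]
as $E^\delta_m$-representations, and in particular $\dim\mathrm{End}(V_c^{\minv(c)})$ equals the number of such $\sigma$, i.e. $V_c^m$ is multiplicity free. Pulling each $L_\sigma$ back to $P_{c-\minv(c)\rho}$ and inducing to $G$ gives exactly the representations $\widetilde{L}_{c,\sigma}$ of the preceding definition.

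Finally I would assemble: by \eqref{cn3} the constituents of $V_c$ are the $\ind_{P_{c-\minv(c)\rho}}^G(\mathrm{Ind}_{E^\delta_m}^{P_{c-\minv(c)\rho}}(L_\sigma)) = \widetilde{L}_{c,\sigma}$, one for each admissible $\sigma$, and since induction from a fixed subgroup preserves the dimension of the endomorphism algebra in this setting (the equality of endomorphism dimensions in \eqref{cn3} is an equality, so no new intertwiners appear under induction and no two distinct $\widetilde{L}_{c,\sigma}$ coincide), the decomposition of $V_c$ is again multiplicity free and indexed by $\widehat{\Delta}^\delta_{\minv(c)} \smallsetminus \iota^*(\widehat{\Delta}^\delta_{\minv(c)-1})$. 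The only point requiring a little care — and the main obstacle — is to make sure that distinct $\sigma$ give genuinely inequivalent $\widetilde{L}_{c,\sigma}$ and that each is irreducible; this is forced by the equality (not merely inequality) of endomorphism-algebra dimensions in \eqref{cn3} together with the fact that $\bigoplus_\sigma L_\sigma$ is itself multiplicity free with pairwise non-isomorphic irreducible summands from Theorem~\ref{thm:structure.tilde.W00}, so the count of constituents of $V_c$ matches $\dim\mathrm{End}_G(V_c)$ exactly and no collapsing can occur.
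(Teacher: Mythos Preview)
Your proposal is correct and follows essentially the same approach as the paper, which simply writes that the corollary ``follows from Theorem~\ref{thm:LsigmaInVcm} and statement~\eqref{cn3}.'' You have unpacked exactly the content implicit in that one-line proof, including the point that the \emph{equality} of endomorphism dimensions in \eqref{cn3} is what guarantees irreducibility and pairwise inequivalence of the $\widetilde{L}_{c,\sigma}$ after induction.
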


\begin{proof} Follows from Theorem~\ref{thm:LsigmaInVcm} and statement~\eqref{cn3}.
\end{proof}


\section{Multiplicities and Degrees}
\label{sec:mult.and.deg}

In this section we compute explicitly the dimensions and multiplicities of the irreducible constituents in $V=\ind_B^G(\triv)$.

\subsection{Multiplicities} We first settle a small debt from subsection \ref{subsec:CN}. we shall need the following lemma.

\begin{lem}\label{lem:CN.condition.simplified} Let $x,y,z \in \N$ be such that $0 < x+y -z < \min\{x,y\}$. Then the following are equivalent
\begin{enumerate}
\item $x+y-z \leq \lfloor \min\{x,y \}/2 \rfloor$.
\item $\mu(x,y,z)=\min\{x+y-z,z-x,z-y\}=x+y-z=\kinv(x,y,z)$.
\end{enumerate}
\end{lem}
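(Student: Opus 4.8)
The plan is to prove the equivalence by a direct elementary computation, writing $s = x+y-z$ for the common value $\kinv(x,y,z)$ and analyzing the three terms in the minimum defining $\mu(x,y,z)$. Set $a = z-x = y-s$ and $b = z-y = x-s$; by the hypothesis $0 < s < \min\{x,y\}$ both $a,b$ are positive, so $\mu(x,y,z) = \min\{s,a,b\} = \min\{s,\, y-s,\, x-s\}$. Condition (2) asserts precisely that $s \le y-s$ and $s \le x-s$, i.e. that $2s \le x$ and $2s \le y$. Since $s$ is an integer, $2s \le x$ is equivalent to $s \le \lfloor x/2\rfloor$, and similarly $2s\le y \iff s \le \lfloor y/2\rfloor$; together these say $s \le \lfloor \min\{x,y\}/2\rfloor$, which is exactly condition (1).

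Concretely, I would carry out the following steps. First, observe that the hypothesis $x+y-z < \min\{x,y\}$ already forces both $z-x > 0$ and $z-y>0$, so all three quantities $x+y-z$, $z-x$, $z-y$ over which the minimum is taken are strictly positive; in particular $\mu(x,y,z)$ is genuinely their minimum and equals $x+y-z$ if and only if $x+y-z \le z-x$ and $x+y-z \le z-y$. Second, rewrite these two inequalities as $2(x+y-z)\le x+y - (z-x) - (z-y) $... more cleanly: $x+y-z \le z-x \iff 2x + y - z \le z \iff 2x \le 2z - y$; it is cleaner to note $x+y-z \le z - x \iff 2(x+y-z) \le (z-x)+(x+y-z)=y$ — wait, let me instead just use $z = x+y-s$ so $z-x = y-s$ and $z-y = x-s$, giving $s\le z-x \iff s \le y-s \iff 2s \le y$ and $s \le z-y \iff 2s\le x$. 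Third, invoke integrality: for integers, $2s \le x$ is equivalent to $s \le \lfloor x/2 \rfloor$. Fourth, combine to get $s \le \lfloor \min\{x,y\}/2\rfloor$, which is (1), and conversely (1) gives $2s \le \min\{x,y\}\le x$ and $\le y$, recovering (2).

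I do not anticipate a genuine obstacle here; the only point requiring a moment of care is the passage $2s \le x \iff s \le \lfloor x/2\rfloor$, which uses that $s$ is an integer (so that $2s \le x$ and $s \le x/2$ are the same, and then $s \le \lfloor x/2\rfloor$ since $s$ is an integer $\le x/2$), and the observation that $\lfloor \min\{x,y\}/2\rfloor = \min\{\lfloor x/2\rfloor, \lfloor y/2\rfloor\}$, which is immediate because $\lfloor \cdot /2\rfloor$ is monotone. The bookkeeping of making sure the strict inequalities in the hypothesis are used only to guarantee positivity of $z-x$ and $z-y$ (and hence that (2)'s final equality $\mu = \kinv$ is the substantive content, the equality $\mu = x+y-z$ being what we actually characterize) should be stated explicitly so the logical structure of the iff is transparent.
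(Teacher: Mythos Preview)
Your proposal is correct and follows essentially the same elementary route as the paper: rewrite $z-x$ and $z-y$ as $y-s$ and $x-s$ with $s=x+y-z$, so that condition (2) becomes $2s\le x$ and $2s\le y$, and then use integrality to pass to the floor form of condition (1). The paper's version is slightly more compressed (it assumes without loss of generality $x\le y$ and chains the equivalences in one line), but the content is identical.
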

\begin{proof} Without loss of generality assume that $x \le y$. Then $(1)$ is equivalent to $2(x+y-z) \leq x \leq y$, which in turn is equivalent to $x+y-z \leq z-y \le z-x$, and the latter is equivalent to (2).
\end{proof}

\begin{prop}\label{prop:reformulation} Let $c,d \in \cone$. Then $V_c \simeq V_{d}$ if and only if $\level(c)=\level(d)$, $\kinv(c)=\kinv(d)$, $\minv(c)=\minv(d)$ and $c=d$ if $\kinv(c)>\minv(c)$.
\end{prop}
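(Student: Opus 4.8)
The plan is to prove Proposition~\ref{prop:reformulation} by translating the statement into the language of the three invariants $\level,\kinv,\minv$ and invoking the decomposition of $V_c$ obtained earlier together with the results of Campbell--Nevins on the boundary case. The statement splits naturally according to whether $c$ (equivalently $d$) lies in $\cbd$ or $\cint$, since the invariant data behaves very differently in the two regimes.

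\textbf{The boundary case.} If $c \in \cbd$ then $\minv(c)=0$: indeed on the boundary one of the three quantities $c_1+c_2-c_3$, $c_3-c_1$, $c_3-c_2$ vanishes. By \eqref{cn2} the representation $V_c$ is irreducible. One first checks that if $V_c \simeq V_d$ then necessarily $d \in \cbd$ as well, e.g. by comparing dimensions or using that $V_d$ is reducible when $d\in\cint$ by \eqref{cn2}; and that $\level(c)=\level(d)$ since the level is an isomorphism invariant (both representations factor through $\aGL_3(\lri/\mi^{\level})$ and not through a smaller quotient). It then remains to match up the irreducible constituents on the boundary. Here I would quote the relevant portion of \cite[Theorems 6.1, 7.1, 8.1]{MR2504482}, which classifies the $V_c$ for $c\in\cbd$, and observe that the classification is exactly by the pair $(\level,\kinv)$ with $\minv\equiv 0$; the clause ``$c=d$ if $\kinv(c)>\minv(c)$'' then reads ``$c=d$ if $\kinv(c)>0$'', and this too is part of the Campbell--Nevins description. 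One has to be slightly careful about the overlap $c\in\cbd$ with $\kinv(c)=0$, where genuine coincidences $V_c\simeq V_d$ with $c\ne d$ occur, and confirm this is consistent with the stated condition.

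\textbf{The interior case.} Suppose $c,d\in\cint$. By Corollary~\ref{cor:decomposition.of.Vc} we have the explicit multiplicity-free decomposition
\[
V_c = \bigoplus_{\sigma \in \widehat{\Delta}^{\delta_c}_{\minv(c)} \smallsetminus \iota^*(\widehat{\Delta}^{\delta_c}_{\minv(c)-1})} \widetilde{L}_{c,\sigma},\qquad \delta_c = \pi^{\kinv(c)-\minv(c)},
\]
and similarly for $d$. The group $\Delta^\delta_\bullet$ is $\com^\times$ or $\com$ according to whether $\delta$ is a unit, i.e. according to whether $\kinv(c)=\minv(c)$ or $\kinv(c)>\minv(c)$. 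Thus the isomorphism type of the indexing set, and the dimensions of the constituents (given in Theorem~\ref{thm:structure.tilde.W00} as explicit polynomials in $q$ and $m=\minv(c)$, scaled by the index $[G:P_{c-\minv(c)\rho}]$, which depends only on $\level(c)$ and $\kinv(c)$), depend only on the triple $(\level(c),\kinv(c),\minv(c))$ together with the unit/non-unit dichotomy — but that dichotomy is itself determined by whether $\kinv=\minv$. This gives the ``if'' direction when $\kinv(c)=\minv(c)$, and it also forces, in the converse direction, equality of all three invariants. When $\kinv(c)>\minv(c)$ one must additionally show $c=d$; here I would argue that the constituent $\widetilde L_{c,\sigma}$ remembers $c$, for instance because $V_c = \ind_{P_{c-\minv(c)\rho}}^G(\cdots)$ and the conjugacy class of the compact subgroup $P_{c-\minv(c)\rho}$, together with the level, pins down $c$ — alternatively invoke the double-coset computations of \cite{MR2504482} directly, or Lemma~\ref{lem:CN.condition.simplified} to rephrase the hypothesis $\kinv>\minv$ combinatorially and then separate the remaining cases by hand.

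\textbf{The main obstacle} I anticipate is the converse (``only if'') direction in the interior case: showing that two distinct $c,d\in\cint$ with the same triple of invariants but $\kinv(c)=\minv(c)$ really do give isomorphic $V_c\simeq V_d$, rather than merely representations with matching dimension data. For this it is not enough to count dimensions; one must exhibit an actual intertwiner, which means tracking the construction of $\widetilde L_{c,\sigma}$ through the identifications of Section~\ref{sec:bmk} and checking that $\ind_{P_{c-\minv(c)\rho}}^G\circ\,\ind_{\emk}^{P_{c-\minv(c)\rho}}$ applied to $L_\sigma$ depends only on $\delta$ (which is $1$ up to units when $\kinv=\minv$) and on the index of $P_{c-\minv(c)\rho}$. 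The cleanest route is probably to observe that when $\kinv(c)=\minv(c)$ the element $c-\minv(c)\rho$ lies on the boundary $\cbd$ with $\kinv(c-\minv(c)\rho)=0$, so that the ambiguity in $c$ is precisely the ambiguity already present for boundary points with vanishing $\kinv$, reducing the interior claim to the boundary claim already handled.
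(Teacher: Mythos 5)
Your boundary case is essentially the paper's. For the interior case, however, the paper does something much more direct than what you propose: it simply quotes the Campbell--Nevins classification \cite[Theorem~8.1]{MR2504482} (which states that for $c,d\in\cint$, $V_c\simeq V_d$ iff $c_3=d_3$, $c_1+c_2=d_1+d_2$, and $c_1+c_2-c_3\le\lfloor\min\{c_1,c_2,d_1,d_2\}/2\rfloor$), and then uses the purely combinatorial Lemma~\ref{lem:CN.condition.simplified} to recognise the third Campbell--Nevins condition as exactly the statement $\minv(c)=\kinv(c)=\kinv(d)=\minv(d)$. In other words, the entire Proposition is, by design, a reformulation of the results of \cite{MR2504482} plus a short inequality manipulation, not a consequence of the new decomposition machinery of Sections~\ref{sec:bmk}--\ref{sec:decomposition}. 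You instead route through Corollary~\ref{cor:decomposition.of.Vc}, which makes the whole thing heavier and, as you partly notice yourself, leaves the hard direction unproved.

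Concretely, the gap is the ``if'' direction in the interior when $\kinv(c)=\minv(c)=m$: you must produce an actual $G$-intertwiner $V_c\to V_d$ for $c\ne d$ with matching invariants. Comparing the dimension and multiplicity data from Theorem~\ref{thm:structure.tilde.W00} and Corollary~\ref{cor:decomposition.of.Vc} can only show the data agree, not that the representations are isomorphic. Your suggested reduction --- that $c-m\rho$ and $d-m\rho$ lie on the $\kinv=0$ part of $\cbd$, so ``the ambiguity in $c$ is precisely the ambiguity already present for boundary points'' --- is an assertion, not an argument. What would actually be required is this: the boundary isomorphism $V_{c-m\rho}\simeq V_{d-m\rho}$ comes (via \cite[Proposition~6.2]{MR2504482}) from conjugacy of the parahoric subgroups $P_{c-m\rho}$ and $P_{d-m\rho}$ by some $g\in G$; one must then check that this conjugation is compatible with the quotient maps \eqref{eq:Pcmktobmk} and \eqref{eq:bmktoemk}, so that ${}^g\!\left(\mathrm{Ind}_{\emk}^{P_{c-m\rho}}L_\sigma\right)\simeq\mathrm{Ind}_{\emk}^{P_{d-m\rho}}L_{\sigma'}$ for an appropriate $\sigma'$, and hence $\widetilde L_{c,\sigma}\simeq\widetilde L_{d,\sigma'}$. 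Nothing in your sketch verifies this. The same issue reappears (less acutely) in your treatment of the case $\kinv(c)>\minv(c)$, where ``the conjugacy class of $P_{c-\minv(c)\rho}$ pins down $c$'' is again something that must be extracted from the double-coset analysis of \cite{MR2504482}, which is exactly what Theorem~8.1 there packages for you. The shortest correct completion of your outline is to replace the entire interior discussion by the citation of \cite[Theorem~8.1]{MR2504482} together with Lemma~\ref{lem:CN.condition.simplified}, which is what the paper does.
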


\begin{proof} We go over the various possibilities for $c$.
\begin{enumerate}
\item [(1)] If $c \in \cone$ satisfies $c_3=c_1+c_2$, then by Theorem 6.1 and Proposition 6.2 in~\cite{MR2504482}, for any $d \in \cone$ we have $V_c \simeq V_d$ if and only if $c_3=d_3$ and $c_1+c_2=d_1+d_2$. These two equations are equivalent to $\level(c)=\level(d)$ and $\kinv(c)=\kinv(d)$, and in such case $0 \le \minv(c)\le \kinv(c)=0$.

\item [(2)] If $c \in \cone$ satisfies $c_3=\max \{c_1,c_2\} \ge 1$, then by~\cite[Theorem 7.1]{MR2504482} and the discussion preceding it, for any $d \in \cone$ we have $V_c \simeq V_d$ if and only if $c=d$. The conditions on $c$ imply that $\kinv(c)>\minv(c)=0$ and the assertion follow.
\item [(3)] It remains to treat $c,d \in \cint$. In particular this means that $\kinv(c),\kinv(d) > 0$. By \cite[Theorem~8.1]{MR2504482}, we have that $V_c \simeq V_d$ if and only if
\begin{enumerate}
\item [(i)] $\level(c)=c_3=d_3=\level(d)$.

\item [(ii)] $\kinv(c)=c_1+c_2-c_3=d_1+d_2-d_3=\kinv(d)$.

\item [(iii)] $c_1+c_2-c_3 \le \lfloor \min\{c_1,c_2,d_1,d_2\}/2 \rfloor$.

\end{enumerate}
In the presence of (i) and (ii), condition (iii) is equivalent to $\minv(c)=\kinv(c)=\kinv(d)=\minv(d)$, by applying Lemma~\ref{lem:CN.condition.simplified} to $c$ and to $d$.

\end{enumerate}

\end{proof}

Define an equivalence relation on $\cone$ by setting $c \sim d$ if and only if $V_c \simeq V_d$. Let $a:\N_0^3 \to \N_0$ be the function defined by
\begin{equation}
a(m,k,\ell)=\left\{
           \begin{array}{ll}
             \ell-3k+1, & \hbox{if $\ell\ge 3k=3m \ge 0$ ;} \\
             1, & \hbox{if $\ell \ge 2m+k>3m \ge 0$,} \\
           \end{array}
         \right.
\end{equation}
and zero otherwise.

\begin{prop}\label{prop:amkl} $|[c]|=a(\minv(c),\kinv(c),\level(c))$ for every $c \in \cone$.
\end{prop}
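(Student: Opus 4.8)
The plan is to compute the cardinality of the equivalence class $[c]$ directly from the characterization in \eqref{cn1} (equivalently Proposition~\ref{prop:reformulation}), by parametrizing all $d \in \cone$ sharing the three invariants $(\minv,\kinv,\level)$ of $c$. Write $m=\minv(c)$, $k=\kinv(c)$, $\ell=\level(c)$. First I would dispose of the two degenerate ranges. If $c \in \cbd$, then $\minv(c)=0$ and either $k=0$ (the face $c_3=c_1+c_2$) or $k>0=\minv(c)$ (the face $c_3=\max\{c_1,c_2\}$). In the first case $d \sim c$ iff $d_3=\ell$ and $d_1+d_2=\ell$, and the pairs $(d_1,d_2)$ with $d_1,d_2 \le d_3=\ell$ and $d_1+d_2=\ell$ are exactly $(0,\ell),(1,\ell-1),\dots,(\ell,0)$, giving $\ell+1$ elements — matching $a(0,0,\ell)=\ell-0+1$. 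In the second case \eqref{cn1} forces $d=c$, so $|[c]|=1$, matching $a(0,k,\ell)=1$ when $\ell \ge 2\cdot 0 + k > 0$.

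The substantive case is $c \in \cint$, so $0 < k < \min\{c_1,c_2\}$. Here \eqref{cn1} tells us $d \sim c$ iff $\level(d)=\ell$, $\kinv(d)=k$, and $\minv(d)=m$. I would split on whether $\minv(c)=\kinv(c)$ or $\minv(c)<\kinv(c)$. If $m<k$, then by Proposition~\ref{prop:reformulation} the class is a singleton, $|[c]|=1$; and the inequalities $0<k<\min\{c_1,c_2\}$ together with $m=\min\{k,c_3-c_1,c_3-c_2\}<k$ and $\ell=c_3$ give $\ell=c_3 > \max\{c_3-c_1,c_3-c_2\} \ge m$ and $c_1+c_2 = \ell+k$, $c_1,c_2 \ge \ell - (\text{the complementary }c_3-c_j) > \ell - k$... the point is to verify these force $\ell \ge 2m+k$ and $2m+k>3m$ (the latter is just $k>m$), so that indeed $a(m,k,\ell)=1$. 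If instead $m=k$ (which by Lemma~\ref{lem:CN.condition.simplified} happens exactly when $k \le \lfloor \min\{c_1,c_2\}/2\rfloor$), then $d \sim c$ iff $d_3=\ell$, $d_1+d_2-d_3=k$, and $\min\{d_1,d_2\} \ge 2k$ — the last being the translation of $\minv(d)=k$ via Lemma~\ref{lem:CN.condition.simplified}. So I must count pairs $(d_1,d_2)$ with $d_1+d_2=\ell+k$, $d_1,d_2 \le \ell$, and $d_1,d_2 \ge 2k$. Substituting $d_1 = 2k + s$ with $s \ge 0$ forces $d_2 = \ell - k - s$, and the constraints $d_2 \ge 2k$ and $d_1 \le \ell$ both read $0 \le s \le \ell - 3k$, giving $\ell-3k+1$ solutions when $\ell \ge 3k$ and none otherwise. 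Since here $m=k$, this is exactly $a(m,k,\ell)=a(k,k,\ell)=\ell-3k+1$ in the regime $\ell \ge 3k = 3m$.

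Finally I would assemble these cases and check consistency: every $c \in \cone$ falls into exactly one of the four sub-cases, the computed count always equals $a(\minv(c),\kinv(c),\level(c))$, and the two clauses defining $a$ are mutually exclusive (the first needs $k=m$ and $\ell \ge 3m$, the second needs $k>m$), so $a$ is well defined on the relevant locus and zero elsewhere. The main obstacle is purely bookkeeping: one must be careful that the $\cone$-membership inequalities $d_1,d_2 \le d_3 \le d_1+d_2$ are automatically satisfied by the pairs counted (e.g. $d_3=\ell \le d_1+d_2 = \ell+k$ is clear, and $d_1,d_2 \le \ell=d_3$ is imposed explicitly), and that the translation of the condition $\minv(d)=\minv(c)$ into an inequality on $\min\{d_1,d_2\}$ via Lemma~\ref{lem:CN.condition.simplified} is applied in the correct direction — it is this lemma, together with Proposition~\ref{prop:reformulation}, that does all the real work.
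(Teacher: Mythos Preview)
Your proposal is correct and follows essentially the same route as the paper: in the case $\kinv(c)=\minv(c)=k$ you parametrize the class as $\{(2k+s,\ell-k-s,\ell)\mid 0\le s\le \ell-3k\}$ exactly as the paper does, and in the case $\kinv(c)>\minv(c)$ you invoke Proposition~\ref{prop:reformulation} to get a singleton and then check the inequality $\ell\ge 2m+k$, which the paper does via the identity $\ell=(c_1+c_2-c_3)+(c_3-c_1)+(c_3-c_2)\ge k+2m$. The only cosmetic differences are that you separate the boundary sub-cases out first (these are absorbed into the two main cases in the paper) and that you cite Lemma~\ref{lem:CN.condition.simplified} where the paper rederives the inequality $2k\le c_1,c_2$ directly.
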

\begin{proof} If $c \in \cone$ satisfies $\kinv(c)=k=m=\minv(c)$ and $\level(c)=c_3=\ell$ then $k =c_1+c_2-\ell\le \ell-c_1,\ell-c_2$. It follows that $2k \le c_1,c_2 \le \ell-k$, in particular $\ell \ge 3k$, and that the $c$'s satisfying these conditions are precisely of the form
\[
\{(2k+i,\ell-k-i,\ell) \mid i=0,...,\ell-3k\},
\]
and their number is $\ell-3k+1$.

If $c \in \cone$ satisfies $\kinv(c)=k>m=\minv(c)$ and $\level(c)=c_3=\ell$, then Proposition~\ref{prop:reformulation} implies that $V_c \simeq V_d$ if and only if $c=d$ hence $[c]$ is a singleton. We also have that
\[\
\ell=(c_1+c_2-c_3)+(c_3-c_1)+(c_3-c_2) \ge k+2m.
\]
\end{proof}

\begin{cor} A complete decomposition of $V=\ind_B^G(\triv)$ to irreducible representations is given by
\[
V \simeq \left(\bigoplus_{c \in \cbd} V_c\right) \bigoplus \left(    \bigoplus_{[c] \in \cint / \sim} ~ \bigoplus_{\sigma \in  \widehat{\Delta}^\delta_{m} \smallsetminus \iota^*\widehat{\Delta}^\delta_{m-1}}{\widetilde{L}_{c,\sigma}}^{\oplus a(m,k,\ell)}\right).
\]
\end{cor}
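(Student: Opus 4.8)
The plan is to assemble the final corollary purely as a bookkeeping exercise, combining the decomposition \eqref{eq:Vc-decomposition} with the results already established. First I would recall \eqref{eq:Vc-decomposition}, which gives $V=\bigoplus_{c\in\cone}V_c$, and split the index set as $\cone = \cbd \sqcup \cint$. For $c\in\cbd$, statement~\eqref{cn2} says $V_c$ is already irreducible, so those contribute the first summand $\bigoplus_{c\in\cbd}V_c$ verbatim, and no further analysis is needed. The only work is on the interior part.

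Next I would handle the interior sum $\bigoplus_{c\in\cint}V_c$. Here Corollary~\ref{cor:decomposition.of.Vc} gives, for each individual $c\in\cint$, the multiplicity-free decomposition $V_c=\bigoplus_{\sigma}\widetilde L_{c,\sigma}$ over $\sigma\in\widehat\Delta^\delta_{\minv(c)}\smallsetminus\iota^*(\widehat\Delta^\delta_{\minv(c)-1})$, where $\delta=\pi^{\kinv(c)-\minv(c)}$. To pass from a sum over all $c\in\cint$ to a sum over equivalence classes $[c]\in\cint/\sim$, I would invoke Proposition~\ref{prop:reformulation} (equivalently \eqref{cn1}): two elements $c,d\in\cint$ satisfy $V_c\simeq V_d$ exactly when $(\minv,\kinv,\level)$ agree on them (and if $\kinv>\minv$ the class is a singleton anyway). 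In particular, within one class $[c]$ the triple $(m,k,\ell)=(\minv(c),\kinv(c),\level(c))$ — and hence $\delta$, the group $\Delta^\delta_m$, and the indexing set of $\sigma$'s, and each representation $\widetilde L_{c,\sigma}$ up to isomorphism — depends only on the class. Therefore $\bigoplus_{c\in\cint}V_c = \bigoplus_{[c]\in\cint/\sim}\,|[c]|\cdot V_c = \bigoplus_{[c]}\,\bigoplus_{\sigma}\widetilde L_{c,\sigma}^{\oplus |[c]|}$, where for each class I fix one representative $c$.

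Finally I would substitute the count $|[c]|=a(\minv(c),\kinv(c),\level(c))=a(m,k,\ell)$ supplied by Proposition~\ref{prop:amkl}. Combining the boundary and interior contributions yields exactly the asserted formula
\[
V \simeq \left(\bigoplus_{c \in \cbd} V_c\right) \bigoplus \left(\bigoplus_{[c] \in \cint / \sim}\ \bigoplus_{\sigma \in \widehat{\Delta}^\delta_{m} \smallsetminus \iota^*\widehat{\Delta}^\delta_{m-1}} \widetilde{L}_{c,\sigma}^{\oplus a(m,k,\ell)}\right).
\]
The only point requiring a word of care — and the closest thing to an obstacle — is the well-definedness of the inner sum: one must check that the isomorphism class of $\widetilde L_{c,\sigma}$ (and the set it is indexed by) genuinely descends to the class $[c]$ rather than depending on the chosen representative. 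This follows because $\widetilde L_{c,\sigma}$ is, by \eqref{cn3} and the definition, induced from $L_\sigma$, whose construction in Theorem~\ref{thm:structure.tilde.W00} depends on $c$ only through $\delta$, which in turn depends only on $\kinv(c)-\minv(c)$; and $V_c\simeq V_d$ for $c,d$ in the same class forces the full decompositions to match term by term since both are multiplicity free. No new estimates or computations are needed beyond citing the assembled pieces.
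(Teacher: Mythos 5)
Your proposal is correct and follows the paper's own route exactly: the corollary is proved by combining \eqref{eq:Vc-decomposition} split over $\cbd\sqcup\cint$, \eqref{cn2} for the boundary pieces, Corollary~\ref{cor:decomposition.of.Vc} for each interior $V_c$, and Proposition~\ref{prop:amkl} for the class sizes $a(m,k,\ell)$. Your added remark on the well-definedness of $\widetilde L_{c,\sigma}$ across a $\sim$-class (resting on multiplicity-freeness of the $V_c$) is a useful clarification but does not depart from the paper's argument.
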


\begin{proof} Follows from \eqref{eq:Vc-decomposition}, \eqref{cn2}, Corollary~\ref{cor:decomposition.of.Vc} and Proposition~\ref{prop:amkl}.

\end{proof}

\subsection{Dimensions}
The equidimensionality of the irreducible constituents of $V_c$ implies that the dimension of an irreducible subrepresentation $W$  of $V$ is uniquely determined by the elements $c\in \cone$ with $\mathrm{Hom}_G(W, V_c) \neq (0)$. The dimension of an irreducible representation occurring in $V_c$ ($c \in \cone$) is
\begin{equation}\label{eq:dimensions.of.irreps.in.V}
\left\{
\begin{array}{ll}
1, & \mbox{if $c = (0,0,0)$;} \\
q^2 + q, & \mbox{if $c = (1,0,1), (0,1,1)$;} \\
q^3, &\mbox{if $c = (1,1,1)$;} \\
(1+q^{-1})(1-q^{-3}) q^{2 \level(c)}, & \mbox{if $\kinv(c)=\minv(c), \level(c) \ge 2$;} \\
 (1-q^{-2})(1-q^{-3})q^{2\level(c)+\kinv(c)-\minv(c)}, & \mbox{if $\kinv(c)>\minv(c), \level(c) \ge 2$.}
\end{array}
\right.
\end{equation}
Indeed, the formulae for $\level(c) \le 1$ are well known, see e.g. \cite{MR2504482}. For $c \in \cint$ with $\level(c) \ge 2$ they follows from the construction of $\widetilde{L}_{c,\sigma}$:
\[
\begin{split}
\dim \widetilde{L}_{c,\sigma}&=[\GL_3(\lri):P_{c-\minv(c)\rho}] \cdot \dim L_\sigma \\
&=(1+q^{-1})(1+q^{-1}+q^{-2}) \cdot q^{c_1+c_2+c_3-3\minv(c)}\cdot\left\{
                                                         \begin{array}{ll}
                                                           q^{2\minv(c)}(1-q^{-1}), & \hbox{if $\kinv(c)=\minv(c)$;} \\
                                                           q^{2\minv(c)}(1-q^{-1})^2, & \hbox{if $\kinv(c)>\minv(c)$;}
                                                         \end{array}
                                                       \right.\\
&=\left\{
  \begin{array}{ll}
 (1+q^{-1})(1-q^{-3}) q^{2 \level(c)}, & \mbox{if $\kinv(c)=\minv(c)$;} \\
 (1-q^{-2})(1-q^{-3})q^{2\level(c)+\kinv(c)-\minv(c)}, & \mbox{if $\kinv(c)>\minv(c)$.}
  \end{array}
\right.
\end{split}
\]
For $c \in \cbd$ with $\level(c)\ge 2$ the formulae are given by Theorems 6.1 and 7.1 in \cite{MR2504482}.

\subsection{Uniform representation growth}

Let $\zeta_{G,V}(s)=\sum_{n=1}^{\infty} r_n(G,V) n^{-s}$ with $r_n(G,V)$ the number of irreducible $n$-dimensional subrepresentations of $V$ of dimension $n$. The generating function $\zeta_{G,V}(s)$ enumerates irreducible representations in $V$ according to their dimensions and regardless of their isomorphism type. For every $n \in \N$, let $p_n \in \{0,1,2\}$ be the residue of $n$ modulo $3$ and let $f_n(x),g_n(x) \in \Z[x]$ be the polynomials
\[
\begin{split}
f_n(x) &= \left\{\begin{array}{ll} x^{\lfloor n/6 \rfloor -1 }((p_{\frac{n}{2}}+1) x + (2-p_{\frac{n}{2}})), & \mbox{if $n \in 2 \mathbb N_0 + 4$;} \\ 
 0 , & \mbox{otherwise.}
  \end{array}\right. \\
g_n(x) & =  \frac{x^{\lfloor n/2 \rfloor}  -1 }{x-1} + \frac{x^{\lfloor n/2 \rfloor-1}-1}{x-1} + \min \{ p_n, 1\} x^{\lfloor n/2 \rfloor -1}, \quad (n \ge 5).
\end{split}
\]
Denote $\eta_1(q) = (1+q^{-1})(1-q^{-3})$ and $\eta_2(q) =  (1-q^{-2})(1-q^{-3})$.

\begin{thm}
For every non-archimedean local field $F$ with ring of integers $\lri$ and residue field of cardinality $q$
\[
\zeta_{G,V}(s)=1+2(q+q^2)^{-s}+q^{-3s}+ \sum_{n=4}^\infty f_n(q)\left(\eta_1(q)q^{n}\right)^{-s} + \sum_{n=5}^\infty g_n(q)\left(\eta_2(q)q^{n}\right)^{-s}.
\]

\end{thm}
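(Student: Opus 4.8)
The plan is to translate the previously‐established structural results into a bookkeeping exercise over the invariants $(\minv,\kinv,\level)$. Concretely, by the corollary preceding the theorem,
\[
V \simeq \Bigl(\bigoplus_{c\in\cbd} V_c\Bigr)\;\bigoplus\;\Bigl(\bigoplus_{[c]\in\cint/\sim}\;\bigoplus_{\sigma}\widetilde{L}_{c,\sigma}^{\oplus a(m,k,\ell)}\Bigr),
\]
and each $V_c$ with $c\in\cbd$ is irreducible by \eqref{cn2}. So to compute $\zeta_{G,V}(s)$ I need, for each possible dimension $n$ occurring in $V$, the total number $r_n(G,V)$ of irreducible subrepresentations of that dimension, counted with the multiplicities $a(m,k,\ell)$ in the interior case and with multiplicity $1$ (but summed over the $\sim$-classes, which are singletons on the boundary) on the boundary. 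The dimension of each irreducible is read off from \eqref{eq:dimensions.of.irreps.in.V}, which expresses $\dim$ purely in terms of $\level(c)$ and $\kinv(c)-\minv(c)$.

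First I would handle the low-dimensional terms $n\le q^3$ coming from $\level(c)\le 1$: the element $(0,0,0)$ gives the trivial representation (the summand $1$), the two elements $(1,0,1),(0,1,1)$ each lie on $\cbd$ with $\minv=\kinv=0$ and give an irreducible of dimension $q^2+q$ (hence the $2(q+q^2)^{-s}$ term), and $(1,1,1)$ — the unique interior point with $\level=1$ — gives dimension $q^3$ with multiplicity $a(0,1,1)=1$ (one checks $\ell=1\ge 2m+k=1>3m=0$), producing the $q^{-3s}$ term. Then I would fix $\ell=\level\ge 2$ and split into the two regimes of \eqref{eq:dimensions.of.irreps.in.V}. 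In the regime $\kinv=\minv=:k$ the dimension is $\eta_1(q)q^{2\ell}$, so writing $n=2\ell$ forces $n\in2\N_0+4$; for such $n$ I must count, over all $c\in\cone$ with $\level(c)=\ell=n/2$ and $\kinv(c)=\minv(c)=k$, the number of pairs $([c],\sigma)$ weighted by $a(k,k,\ell)=\ell-3k+1$ (nonzero iff $0\le 3k\le\ell$) and by $|\widehat{\Delta}^\delta_k\smallsetminus\iota^*\widehat{\Delta}^\delta_{k-1}|$, which equals $1$ for $k=0$ (boundary/$\GL$-split case) and $q-1$ resp. $q^{k}-q^{k-1}$ for $k\ge 1$ depending on $\delta$ being a unit or not — but since $\kinv=\minv$ forces $\delta=\pi^{\kinv-\minv}=1$, it is always the unit case, giving $q-1$ for each $k\ge1$ and a single boundary contribution for $k=0$. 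Summing $\sum_{k\ge0}(\ell-3k+1)\cdot(\text{char count for }k)$ over the admissible range $0\le k\le\lfloor\ell/3\rfloor$ should collapse, after reindexing by $\lfloor n/6\rfloor$ and tracking the parity residue $p_{n/2}$ of $\ell$ modulo $3$, to exactly $f_n(q)$; this telescoping/residue computation is the first place real care is needed.

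Next, in the regime $\kinv>\minv$ the dimension is $\eta_2(q)q^{2\ell+\kinv-\minv}$, so set $n=2\ell+\kinv-\minv$; here each $\sim$-class is a singleton by Proposition~\ref{prop:reformulation}, so I enumerate individual $c\in\cint\cup\cbd$ with $\level(c)=\ell$, $\kinv(c)=k>\minv(c)=m$, and $2\ell+k-m=n$, weighted by $|\widehat{\Delta}^\delta_m\smallsetminus\iota^*\widehat{\Delta}^\delta_{m-1}|$ where now $\delta=\pi^{k-m}\in\pi\com$ so the count is $q^m-q^{m-1}$ for $m\ge1$ and $1$ for $m=0$. Using $\ell=(c_1+c_2-c_3)+(c_3-c_1)+(c_3-c_2)$ one sees $\ell\ge k+2m$, and for fixed $(\ell,k,m)$ the number of such $c$ together with the boundary contributions must be reorganized: the boundary pieces ($\minv=0$, $c\in\cbd$) contribute the geometric-series parts $\frac{x^{\lfloor n/2\rfloor}-1}{x-1}+\frac{x^{\lfloor n/2\rfloor-1}-1}{x-1}$ of $g_n(x)$ — these come from $V_c$ with $c$ of the shape $(c_1,c_2,c_1)$ or $(c_1,c_2,c_2)$, i.e. the two faces where $c_3=\max\{c_1,c_2\}$, and are exactly the boundary representations enumerated by Theorems 6.1/7.1 of~\cite{MR2504482} — while the interior pieces with $m\ge1$ contribute $\min\{p_n,1\}x^{\lfloor n/2\rfloor-1}$ after summing $\sum_{m\ge1}(q^m-q^{m-1})$ against the (small, explicitly enumerable) count of $c$'s realizing a given $n$. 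I expect the main obstacle to be precisely this last consolidation: one has to verify that the number of lattice points $c\in\cone$ with prescribed $(\level,\kinv,\minv)$ in the $\kinv>\minv$ regime, folded against the character counts $q^m-q^{m-1}$ and summed over all $(k,m)$ with $2\ell+k-m=n$, produces the clean polynomial $g_n(x)$ with its parity-residue correction term, and that the boundary contribution from $\level\ge2$ matches the two floor-indexed geometric series. Once the two regimes are matched term-by-term against $f_n$ and $g_n$, assembling $\zeta_{G,V}(s)=1+2(q+q^2)^{-s}+q^{-3s}+\sum_{n\ge4}f_n(q)(\eta_1(q)q^n)^{-s}+\sum_{n\ge5}g_n(q)(\eta_2(q)q^n)^{-s}$ is immediate, and the visible rationality/polynomiality of $f_n,g_n$ over $\Z$ gives the asserted uniformity in $q$.
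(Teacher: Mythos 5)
Your overall strategy is exactly the paper's: read off the dimension of each irreducible from the invariant triple $(\minv,\kinv,\level)$ via \eqref{eq:dimensions.of.irreps.in.V}, count $\sim$-classes via $a(m,k,\ell)$, count irreducibles inside each $V_c$ via the character count $|\widehat{\Delta}^\delta_{\minv(c)}\smallsetminus\iota^*\widehat{\Delta}^\delta_{\minv(c)-1}|$, and reorganize the resulting double sum into the two series indexed by $n$. The split of the $g_n$-contribution into the two boundary faces $c_3=\max\{c_1,c_2\}$ plus the interior $m\ge1$ part is a legitimate alternative reorganization of the paper's single sum over $m\in\N_0$ of $|S(m,n)|$, and does match (one can check $\tfrac{x^L-1}{x-1}+\tfrac{x^{L-1}-1}{x-1}+x^{L-1}=2\tfrac{x^L-1}{x-1}$).

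There is, however, a concrete numerical error in the $\kinv=\minv$ regime that would produce the wrong $f_n$. When $\delta=\pi^{\kinv-\minv}=1$ is a unit and $m=k=\minv(c)\ge1$, the relevant group is $\Delta^\delta_m\simeq\com^\times=\lri_m^\times$, so the number of new characters is
\[
\bigl|\widehat{\Delta}^\delta_m\bigr|-\bigl|\iota^*\widehat{\Delta}^\delta_{m-1}\bigr|=|\lri_m^\times|-|\lri_{m-1}^\times|=
\begin{cases} q-2, & m=1,\\ q^{m-2}(q-1)^2, & m\ge2,\end{cases}
\]
not the constant $q-1$ you claim for all $k\ge1$. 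The difference is not harmless: for $n=6$ (so $\ell=3$, $M=1$) the correct count is $4\cdot1+1\cdot(q-2)=q+2=f_6(q)$, whereas your count $4+(q-1)=q+3$ overshoots by $1$; for $n=8$ you would get $2q+3$ instead of $f_8(q)=2q+1$, and the error grows with $\ell$. The clean telescoping that yields the paper's formula
\[
r_{\eta_1(q)q^n}=(\ell-3M+1)|\lri_M^\times|+3\sum_{m=0}^{M-1}|\lri_m^\times|=q^{M-1}\bigl((p_{\ell}+1)q+(2-p_{\ell})\bigr)
\]
(with $M=\lfloor\ell/3\rfloor$, $\ell=n/2$) precisely uses the difference structure $|\lri_m^\times|-|\lri_{m-1}^\times|$ via an Abel summation; it does not work with a constant weight. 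By contrast your non-unit count $q^m-q^{m-1}$ is correct. A second, minor point: $(1,1,1)$ does not lie in $\cint$ (one needs $c_1<c_3$, but here $c_1=c_3=1$), so it is a boundary point and $V_{(1,1,1)}$ is irreducible by \eqref{cn2}; the contribution $q^{-3s}$ is still right, but the justification via $a(0,1,1)$ and the interior decomposition is misplaced.
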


\begin{proof} The first three terms in $\zeta_{G,V}(s)$ correspond to the subrepresentations of $V_c$ with $\level(c) \in \{0,1\}$. For the remaining, we observe that the dimension of an irreducible representation in $V$ with $\level(c) \ge 2$, as listed in \eqref{eq:dimensions.of.irreps.in.V}, determines whether the representation $V_c$ which contains it has $\kinv(c)=\minv(c)$ or $\kinv(c)>\minv(c)$, hence determines its construction. We formally set $\lri_0^\times=\{1\}$ and $\lri_0=\{0\}$ to be the trivial groups and $\lri_{-1}^\times=\lri_{-1}$ to be the empty set. For the irreducible representations of dimension $\eta_1(q)q^n$ we have
\[
\begin{split}
r_{\eta_1(q) q^{-n}}(G,V)&=\sum_{m~\!\!:~\!\! 0 \le 6m \le 2\ell=n}a(m,m,\ell)|\com^\times \smallsetminus \lri_{m-1}^\times| \\
&=\sum_{ 0 \le m \le  \lfloor \ell/3 \rfloor}(\ell-3m+1)|\com^\times| -\sum_{ -1 \le m \le \lfloor \ell/3 \rfloor -1 }(\ell-3m-4)|\lri_{m}^\times|\\
&=(\ell -3 \lfloor \ell/3\rfloor +1)|\lri_{\lfloor \ell/ 3 \rfloor}^\times| + 3\sum_{ 0 \le m \le  \lfloor \ell/3 \rfloor-1}|\com^\times| \\
&=q^{\lfloor n/6 \rfloor -1 }\left((p_{\frac{n}{2}}+1) q + (2-p_{\frac{n}{2}})\right),
\end{split}
\]
for $n \in 2\N_0+4$ and zero otherwise.

It remains to consider the irreducible representations of dimension $\eta_2(q)q^n$. By \eqref{eq:dimensions.of.irreps.in.V}, an the irreducible subrepresentation of $V$ of dimension $\eta_2(q) q^n$ is contained in some $V_c$ with  $\kinv(c) > \minv(c)$. For $m,n \in \N_0$ let $S(m,n)$ be the (possibly empty) subset of $\cone$
\[
S(m,n)=\left\{ c \in \cone \mid 2 \level(c) + \kinv(c)-\minv(c) = n, \kinv(c) > \minv(c)=m \right\}.
\]
If $S(m,n)$ is non-empty and $c \in S(m,n)$ we have $a(\minv(c),\kinv(c),\level(c)) = 1$, and by Corollary \ref{cor:decomposition.of.Vc} the number of irreducible constituents of $V_c$ is equal to $|\com \smallsetminus \pi \com|$. It follows that the number of irreducible constituents of $V$ of dimension $\eta_2(q) q^n$ is equal to
\begin{eqnarray}
\label{for g}
\sum_{m,k~\!\!:~\!\! n=2\ell+k-m, k > m }a(m,k,\ell)|\com \smallsetminus \pi\com|=\sum_{m \in \N_0} |S(m,n)||\com \smallsetminus\pi \com|.
\end{eqnarray}
Note that by the definition of $S(m,n)$ this is a finite sum for every $n \in \N$. Unraveling definitions and carrying the necessary book keeping yields, for $n \ge 5$,
\[
|S(m,n)|=\left\{
           \begin{array}{ll}
             2\lfloor n/2 \rfloor - 2m, & \hbox{if $0 \le m \le  \lfloor n/2 \rfloor - 1$, $n \not\equiv 0 \mod 3$ ;} \\
             2\lfloor n/2 \rfloor - 2m+1, & \hbox{if $0 \le m \le \lfloor n/2 \rfloor$, $n \equiv 0 \mod 3$,}
           \end{array}
         \right.
\]
and zero otherwise. By substituting these values in (\ref{for g}), we see that for $n \equiv 1, 2 \mod 3$,
\begin{eqnarray}
\sum_{m=0}^{\lfloor n/2 \rfloor - 1} 2\left(\lfloor n/2 \rfloor - m\right)(q-1)q^{m-2} &  =  & 2 \frac{q^{\lfloor n/2 \rfloor} -1 }{q-1},
\end{eqnarray}
and for $ n \equiv 0 \mod 3$,
\begin{eqnarray}
\sum_{m=0}^{\lfloor n/2 \rfloor } \left(2\lfloor n/2 \rfloor - 2m+1\right) (q-1)q^{m-2} &  =  & \frac{q^{\lfloor n/2 \rfloor +1}-1}{q-1} + \frac{q^{\lfloor n/2 \rfloor}-1}{q-1}.
\end{eqnarray}
Combining these expressions, we obtain the result.
\end{proof}

\noindent \textbf{Acknowledgements.} We are most grateful to the referee for valuable comments.


\end{document}